\newtheorem{thm}{Theorem}[section]
\newtheorem{lem}[thm]{Lemma}
\newtheorem{thmintro}{Theorem}
\theoremstyle{definition}
\newtheorem{cond}[thm]{Condition}
\newcommand{\Q}{\mathbb Q}
\newcommand{\F}{\mathbb F}
\newcommand{\mf}{\mathfrak}
\newcommand{\mc}{\mathcal}
\newcommand{\mb}{\mathbf}
\def\Irr{{\rm Irr}}
\newcommand{\mr}{\mathrm}
\newcommand{\Fr}{\mathrm{Fr}}
\newcommand{\Rep}{\mathrm{Rep}}
\newcommand{\Res}{\mathrm{Res}}
\newcommand{\der}{\mathrm{der}}
\begin{document}

\title[Representations of disconnected reductive groups over finite fields]
{Endoscopy for representations of disconnected reductive groups over finite fields}

\author{Maarten Solleveld}
\address{IMAPP, Radboud Universiteit Nijmegen, Heyendaalseweg 135,
6525AJ Nij\-megen, the Netherlands}
\email{m.solleveld@science.ru.nl}
\subjclass[2010]{20C33, 20G40}

\maketitle

\begin{abstract}
Let $G$ be the group of rational points of a connected reductive group over a finite field.
Based on work of Lusztig and Yun, we make the Jordan decomposition for irreducible 
$G$-representations canonical. It comes in the form of an equivalence between the category of
$G$-representations with a fixed semisimple parameter $s$ and the category of unipotent 
representations of an endoscopic group of $G$, enriched with an equivariant structure
with respect to the component group of the centralizer of $s$.

Next we generalize these results, replacing the connected reductive group by a 
smooth group scheme with reductive neutral component. Again we esta\-blish canonical
equivalences for both the rational and the geometric series of $G$-representations, 
in terms of unipotent representations of endoscopic groups.
\end{abstract}

\section{Introduction}

Let $\mc G$ be a connected reductive group over a finite field $\F_q$. We are interested 
in the representations of $G = \mc G (\F_q)$, always on vector spaces over $\overline{\Q}_\ell$
where $q$ and $\ell$ are coprime.

Deligne and Lusztig \cite{DeLu} showed that the irreducible $G$-representations can be 
partitioned in series, indexed by semisimple conjugacy classes in the $\F_q$-group $\mc G^\vee$
dual to $\mc G$. More concretely, starting from a maximal $\F_q$-torus $\mc T$ in $\mc G$ and
a character $\theta$ of $T = \mc T(\F_q)$ they constructed a virtual $G$-representation $R_T^G (\theta)$. 
The data $(\mc T,\theta)$ determine a semisimple element $s \in G^\vee = \mc G^\vee (\F_q)$,
up to $G^\vee$-conjugacy. Moreover, it was shown in \cite{DeLu} that $R_T^G (\theta)$ and 
$R_{T'}^G (\theta')$ have no common irreducible constituents if their semisimple parameters 
not geometrically conjugate. That enables the following definition: the geometric series with 
parameter $s$ consists of the irreducible $G$-representations that appear in 
$R_{T}^G (\theta)$ for some $(\mc T, \theta)$ whose associated element in $sG^\vee$ 
is geometrically conjugate to $s$. 

Later Lusztig \cite{Lus1} improved on that, by showing that two virtual representations 
$R_T^G (\theta)$ and $R_{T'}^G (\theta')$ are disjoint whenever their parameters in $G^\vee$ 
are not $G^\vee$-conjugate. This lead to the rational (or Lusztig) series $\Irr_s (G)$, where $s$
represents a semisimple (rational) conjugacy class in $G^\vee (\F_q)$. The element $s$ is called
the semisimple parameter of a representation in $\Irr_s (G)$, uniquely determined up to
$G^\vee$-conjugacy. The representations in $\Irr_1 (G)$ are especially interesting, these are
known as unipotent representations. 

In \cite{Lus1,Lus2}, Lusztig proved that there is always a bijection
\begin{equation}\label{eq:A.1} 
\Irr_s (G) \longleftrightarrow \Irr_1 (Z_{G^\vee}(s)) .
\end{equation}
This works best when $Z_{\mc G^\vee}(s)$ is connected, then the bijection is canonical. Thus 
every irreducible $G$-representation is determined by its semisimple parameter and a unipotent
representation of another reductive $\F_q$-group. This is known as the Jordan decomposition of
irreducible $G$-representations, we refer to \cite{DiMi,GeMa} for much more background.\\

In \cite{LuYu1,LuYu}, Lusztig and Yun set out to make \eqref{eq:A.1} canonical, in geometric
terms. This required several modifications. Firstly, they do not work with rational but with
geometric series. A maximal $\F_q$-torus $\mc T$ in $\mc G$ is fixed, say with Weyl
group $W$. Characters of $T$ are replaced by rank one character sheaves on $\mc T$. These are
not required to be stable under the action of a Frobenius element $\Fr$, only the 
$W$-orbit of $\mc L$ must be $\Fr$-stable. Such data $(W\mc L,\mc T)$ correspond to precisely
one $\Fr$-stable geometric conjugacy class in $G^\vee$.

Secondly, the main results of Lusztig--Yun involve categories, not only irreducible 
representations. Instead of $\Irr_x (G)$ (given some data $x$ for which this is defined) we have 
$\Rep_x (G)$, the category generated by $\Irr_x (G)$.
In particular $\Rep_{W \mc L}(G)$ denotes the geometric series associated to $W\mc L$.

Thirdly, $Z_{\mc G^\vee}(s)$ is replaced by the endoscopic group $\mc H$ dual to 
$Z_{\mc G^\vee}(s)^\circ$ plus some group $\Omega_{\mc L}$ isomorphic to $\pi_0 (Z_{\mc G}(s))$.
In this way the use of $\mc G^\vee$ can be avoided in \cite{LuYu1,LuYu}.

With impressive geometric machinery Lusztig and Yun obtain canonical equivalences between
subcategories of $\Rep_{W\mc L}(G)$ (determined by cells) and subcategories of $\Rep_1 (H)$
enriched with $\Omega_{\mc L}$-equivariant structures. This can be regarded as an endoscopic
categorical version of Deligne--Lusztig theory.\\

The aims of this paper are threefold. Firstly, we want to explain the final results of 
\cite{LuYu} in simpler representation theoretic terms. Secondly, we want to find a version
of these results for rational series, so a canonical Jordan decomposition of irreducible
$G$-representations, like \eqref{eq:A.1}. This may have been known to Lusztig and Yun, but it
is not stated in \cite{LuYu1,LuYu}. And thirdly, we want to generalize these canonical 
equivalences of categories to disconnected reductive $\F_q$-groups. In a sense, this is the
natural generality for such results. Namely, \eqref{eq:A.1} already shows that to analyse
irreducible representations of connected reductive $\F_q$-groups, one has to involve unipotent 
representations of disconnected reductive $\F_q$-groups. If instead we start with a disconnected 
reductive $\F_q$-group, then we still need to study unipotent representations of other 
disconnected reductive $\F_q$-groups, but we stay within the same class of groups.\\

Now we discuss our main results in more detail. Let $\mc G$ be a smooth $\F_q$-group scheme
whose neutral component $\mc G^\circ$ is reductive. Only the connected components of $\mc G$
which are stable under a Frobenius element $\Fr$ contribute to $G = \mc G (\F_q)$, so we may 
assume that all connected components of $\mc G$ are $\Fr$-stable. 
We allow $\pi_0 ( \mc G)$ to be infinite.

Let $\mc L$ be a rank one character sheaf on a maximal $\F_q$-torus $\mc T$ in $\mc G^\circ$,
such that $W(\mc G^\circ,\mc T) \mc L$ is $\Fr$-stable. We define the geometric series 
$\Rep_{\mc W (\mc G,\mc T) \mc L}(G)$ as consisting of
the $G$-representations $\pi$ such that $\Res^G_{G^\circ} \pi$ lies in the subcategory generated
by $\bigcup_{w \in W(\mc G,\mc T)} \Rep_{W(\mc G^\circ,\mc T) w \mc L} (G^\circ)$.

Let $\mc H$ be the (connected) endoscopic group of $\mc G^\circ$ determined by $(\mc L,\mc T)$. 
To $(\mc L,\mc T)$ we also associate a group $\Omega_{\mc L}$, which plays the role of the component 
group of some disconnected version of $\mc H$. 
We need a certain set $\mc B_{\mc L}^\circ$ with an action of $\Omega_{\mc L}^\circ$ (the version
of $\Omega_{\mc L}$ for $\mc G^\circ$) such that $\mf B_{\mc L}^\circ / \Omega_{\mc L}^\circ$
parametrizes the rational conjugacy classes in one geometric conjugacy class in $G^{0 \vee}$.
Every $\beta \in \mf B_{\mc L}^\circ$ gives rise to a $\F_q$-form of $\mc H$, say $\mc H^\beta$.

\begin{thmintro}\label{thm:A}
There exists a canonical equivalence of categories
\[
\Rep_{W(\mc G,\mc T) \mc L} (G) \cong \Big( \bigoplus\nolimits_{\beta \in \mf B_{\mc L}^\circ}
\Rep_1 \big( H^\beta \big) \Big)^{\Omega_{\mc L}} .
\]
\end{thmintro}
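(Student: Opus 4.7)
The strategy is to bootstrap from the connected case to the disconnected case via Clifford/Mackey theory applied to the normal subgroup $G^\circ$ of $G$. First I would invoke the canonical Jordan decomposition for the connected reductive group $\mathcal{G}^\circ$ (the rational-series refinement of \cite{LuYu} developed earlier in this paper), applied to the $W(\mathcal{G}^\circ,\mathcal{T})$-orbit $W(\mathcal{G}^\circ,\mathcal{T})\mathcal{L}$. This gives
\begin{equation*}
\Rep_{W(\mathcal{G}^\circ,\mathcal{T})\mathcal{L}}(G^\circ) \;\cong\; \Big( \bigoplus_{\beta \in \mf B_{\mathcal{L}}^\circ} \Rep_1(H^\beta) \Big)^{\Omega_{\mathcal{L}}^\circ},
\end{equation*}
and it does so canonically. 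Summing over the $W(\mathcal{G},\mathcal{T})/W(\mathcal{G}^\circ,\mathcal{T})$-translates $w\mathcal{L}$ one gets the analogous equivalence for the larger subcategory of $\Rep(G^\circ)$ generated by $\bigcup_{w \in W(\mathcal{G},\mathcal{T})} \Rep_{W(\mathcal{G}^\circ,\mathcal{T}) w\mathcal{L}}(G^\circ)$, which is precisely the range of $\Res^G_{G^\circ}$ on $\Rep_{W(\mathcal{G},\mathcal{T})\mathcal{L}}(G)$.

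Second, I bring in the component group $A := G/G^\circ$. Conjugation by elements of $G$ yields an action of $A$ on $\Rep(G^\circ)$ that permutes the geometric series according to its action on $(\mathcal{T}, W(\mathcal{G}^\circ,\mathcal{T}) \mathcal{L})$. By the defining property of $\Rep_{W(\mathcal{G},\mathcal{T})\mathcal{L}}(G)$ and standard Clifford/Mackey theory for the extension $G^\circ \subset G$, one obtains an equivalence
\begin{equation*}
\Rep_{W(\mathcal{G},\mathcal{T})\mathcal{L}}(G) \;\cong\; \Big( \Rep_{W(\mathcal{G}^\circ,\mathcal{T})\mathcal{L}}(G^\circ) \Big)^{A_{\mathcal{L}}},
\end{equation*}
where $A_{\mathcal{L}}$ is the stabilizer in $A$ of the series $\Rep_{W(\mathcal{G}^\circ,\mathcal{T})\mathcal{L}}(G^\circ)$ and the right-hand side denotes $A_{\mathcal{L}}$-equivariant objects (a priori twisted by a $\Ql^\times$-valued $2$-cocycle coming from non-splitness of $G^\circ \subset G$ over stabilizers of individual irreducibles).

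Third, I transport the $A_{\mathcal{L}}$-action across the equivalence from Step~1 and identify it with the natural action of $\Omega_{\mathcal{L}} / \Omega_{\mathcal{L}}^\circ$ on $\bigoplus_{\beta \in \mf B_{\mathcal{L}}^\circ} \Rep_1(H^\beta)$. By construction, $\Omega_{\mathcal{L}}$ is built as an extension of $\Omega_{\mathcal{L}}/\Omega_{\mathcal{L}}^\circ$ by $\Omega_{\mathcal{L}}^\circ$ that encodes the full action of $\pi_0(\mathcal{G})$ on the endoscopic datum $(\mathcal{H}, \mf B_{\mathcal{L}}^\circ)$: the quotient $\Omega_{\mathcal{L}}/\Omega_{\mathcal{L}}^\circ$ permutes the set $\mf B_{\mathcal{L}}^\circ$ of $\mathbb{F}_q$-forms and acts on each $H^\beta$ through outer automorphisms of $\mathcal{H}$ coming from $G$-conjugation. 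Once this matching is established, taking $A_{\mathcal{L}}$-invariants of the already-$\Omega_{\mathcal{L}}^\circ$-equivariant right-hand side collapses into $\Omega_{\mathcal{L}}$-invariants, and the two equivalences compose to the statement of the theorem.

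The main obstacle lies in Step~3: checking that the conjugation action of $G$ on the geometric series of $G^\circ$ translates, under Lusztig--Yun's geometric construction (character sheaves, horocycle transform, and the resulting Hecke-algebra description of the endoscopic categories), into exactly the natural action of $\Omega_{\mathcal{L}}/\Omega_{\mathcal{L}}^\circ$ on $\bigoplus_\beta \Rep_1(H^\beta)$. This demands a naturality argument for the equivalence of Step~1 with respect to outer automorphisms of $\mathcal{G}^\circ$ preserving $(\mathcal{T}, W(\mathcal{G}^\circ,\mathcal{T})\mathcal{L})$, together with a comparison of the Clifford $2$-cocycle on the representation side with the extension class defining $\Omega_{\mathcal{L}}$ on the endoscopic side. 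If these two cocycles are shown to coincide (up to a coboundary that can be absorbed into the equivalence), the equivariant categories match on the nose, yielding the canonical equivalence as stated.
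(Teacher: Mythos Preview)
Your proposal is correct and follows essentially the same route as the paper: reduce to $G^\circ$ via Clifford theory, apply the connected-case equivalence (Theorem~\ref{thm:E.1}) to $\Rep_{W^\circ\mc L}(G^{\circ\epsilon})$, identify the stabilizer $A_{\mc L}$ with $\Omega_{\mc L}/\Omega_{\mc L}^\circ$, and merge the two layers of equivariance into a single $\Omega_{\mc L}$-equivariant structure. The only noteworthy difference is that the paper bypasses your ``main obstacle'' entirely: rather than extracting an abstract $\pi_0(\mc G)$-action and then chasing a Clifford $2$-cocycle, it defines equivariant structures directly in terms of lifts $\dot\omega\in G^\epsilon$ (Lemma~\ref{lem:E.2}) and $n(\omega,\beta)\in N_{\mc G}(\mc T)$, so the cocycle data is built into the formalism from the start and the comparison you worry about becomes a tautology.
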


The superscript $\Omega_{\mc L}$ means that we enrich the objects in the category between 
brackets with $\Omega_{\mc L}$-equivariant structures. Thus Theorem \ref{thm:A} describes 
$G$-represen\-ta\-tions with semisimple geometric parameter $W(\mc G,\mc T) \mc L$ in terms of 
unipotent representations of forms of an endoscopic group $\mc H$ plus a kind of action of
the group $\Omega_{\mc L}$. When $\mc G$ is connected, Theorem \ref{thm:A} is a quick consequence 
of \cite[Corollary 12.7]{LuYu}.

Suppose now that $\mc L$ is $\Fr$-stable. Then it determines a semisimple rational conjugacy
class in $G^{\circ \vee}$, say represented by $s$. We define the rational series
$\Rep_s (G)$ as the category of $G$-representations $\pi$ such that $\Res^G_{G^\circ} \pi$ 
lies in the subcategory generated by $\bigcup_{g \in G} \mr{Ad}(g)^* \Rep_s (G^\circ)$.

\begin{thmintro}\label{thm:B}
There exists a canonical equivalence of categories
\[
\Rep_s (G) \cong \Rep_1 (H)^{\Omega_{\mc L}^\Fr} .
\]
\end{thmintro}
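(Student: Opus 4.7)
The plan is to deduce Theorem \ref{thm:B} from Theorem \ref{thm:A} by restricting the canonical equivalence to the block corresponding to $s$.

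First, I would show that $\Rep_s (G)$ is a direct summand of $\Rep_{W(\mc G,\mc T) \mc L}(G)$. By construction, the irreducible objects of $\Rep_s (G)$ are the $G$-representations whose restriction to $G^\circ$ is a sum of constituents drawn from the $\pi_0 (G)$-orbit of the rational series $\Rep_s (G^\circ)$. By Lusztig's refinement of Deligne--Lusztig theory, the rational series are pairwise disjoint blocks of the geometric series for $G^\circ$, and this disjointness propagates to $G$ via Clifford theory for $G^\circ \triangleleft G$. Thus $\Rep_s (G)$ is the direct summand of $\Rep_{W(\mc G, \mc T) \mc L}(G)$ cut out by the $\pi_0 (G)$-orbit of $s$ inside the ambient geometric class.

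Second, I would transport this block across the equivalence of Theorem \ref{thm:A}. The right hand side decomposes according to the $\Omega_{\mc L}$-orbits on $\mf B_{\mc L}^\circ$, and for a chosen orbit $\mc O$ with basepoint $\beta$ and stabilizer $\Omega_{\mc L, \beta} \leq \Omega_{\mc L}$, equivariant descent for transitive actions yields an equivalence
\[
\Big( \bigoplus\nolimits_{\beta' \in \mc O} \Rep_1 (H^{\beta'}) \Big)^{\Omega_{\mc L}} \isom \Rep_1 (H^\beta)^{\Omega_{\mc L, \beta}}
\]
by restriction to $\beta$, the residual action on $\Rep_1(H^\beta)$ being inherited from the $\Omega_{\mc L}$-equivariant structure through the isomorphisms $\mc H^\beta \cong \mc H^{\omega \beta}$ supplied for $\omega \in \Omega_{\mc L, \beta}$. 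Since $\mf B_{\mc L}^\circ / \Omega_{\mc L}^\circ$ parametrizes the rational classes in the ambient geometric class of $G^{\circ \vee}$ and $\Omega_{\mc L}/\Omega_{\mc L}^\circ$ encodes the passage from $\mc G^\circ$ to $\mc G$, the $\Omega_{\mc L}$-orbits on $\mf B_{\mc L}^\circ$ are precisely the $\pi_0 (G)$-orbits of those rational classes, and the two block decompositions match up.

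Finally, for the $\Fr$-stable $\mc L$ corresponding to $s$, the distinguished basepoint $\beta_0 \in \mf B_{\mc L}^\circ$ is the one with $\mc H^{\beta_0} = \mc H$, so $\Rep_1 (H^{\beta_0}) = \Rep_1 (H)$; and the block of the right hand side of Theorem \ref{thm:A} corresponding to the $\Omega_{\mc L}$-orbit of $\beta_0$ becomes $\Rep_1 (H)^{\Omega_{\mc L, \beta_0}}$. It remains to identify $\Omega_{\mc L, \beta_0} = \Omega_{\mc L}^\Fr$ and to check that the residual equivariant structure agrees with the canonical one appearing on the right hand side of Theorem \ref{thm:B}; this is the main obstacle. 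It should follow from a careful analysis of $\mf B_{\mc L}^\circ$ as a non-abelian $H^1$-type parameter set for $\F_q$-forms of the geometric class, combined with the Frobenius twist on $\Omega_{\mc L}$, the upshot being that an element of $\Omega_{\mc L}$ fixes the trivial form precisely when it is $\Fr$-invariant.
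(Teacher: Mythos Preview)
Your overall strategy---realize $\Rep_s(G)$ as a direct summand of the geometric series, carry it across Theorem~\ref{thm:A}, and identify the resulting block as $\Rep_1(H)^{\Omega_{\mc L}^{\Fr}}$---is reasonable, and the endpoints are correct. In particular the identification $\Omega_{\mc L,\beta_0}=\Omega_{\mc L}^{\Fr}$ that you flag as the ``main obstacle'' is actually easy: for $\beta_0=1$ one has $\Omega_{\mc L,1}=\{\omega\in\Omega_{\mc L}:\omega\epsilon(\omega)^{-1}\in W_{\mc L}^\circ\}=\Omega_{\mc L}^\epsilon$, since $\Omega_{\mc L}\cap W_{\mc L}^\circ=\{1\}$.

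The genuine gap lies earlier, in the sentence ``the two block decompositions match up.'' Both sides of Theorem~\ref{thm:A} admit a decomposition indexed by $\mf B_{\mc L}^\circ/\Omega_{\mc L}$: the left side via rational series $\Rep_{s_\beta}(G)$, the right side via $\Omega_{\mc L}$-orbits on $\mf B_{\mc L}^\circ$. But nothing you have written explains why the canonical equivalence of Theorem~\ref{thm:A} carries the $\beta$-block on the left to the $\beta$-block on the right, rather than to some other block. The equivalence of Theorem~\ref{thm:A} is assembled from cell-wise equivalences coming from \cite{LuYu}, and its compatibility with the rational-series decomposition is not formal; an abstract $H^1$ bookkeeping argument cannot furnish it, because that bookkeeping only tells you the index sets are in bijection, not that the specific functor respects that bijection.

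The paper establishes this compatibility by a different device: a regular embedding $\mc G^\circ\hookrightarrow\mc G_c$ into a group with connected centre. For $\mc G_c$ one has $\Omega_{\mc L_c}=\{1\}$ and $\mf B_{\mc L_c}=\{1\}$, so rational and geometric series coincide and Theorem~\ref{thm:A} collapses to $\Rep_{s_c}(G_c^\epsilon)\cong\Rep_1(H_c^\epsilon)$. One then tracks the restriction functor $\Res^{G_c^\epsilon}_{G^{\circ\epsilon}}$ through the Lusztig--Yun machinery (this is where functoriality of their construction is used, cf.\ \cite[\S 12.10]{LuYu}) and sees that its image lands in the summand $\beta=1$; repeating with twisted basepoints shows each $\Rep_{s_\beta}$ lands in the corresponding $\beta$-summand, and since the direct sum recovers the full equivalence, each piece is itself an equivalence. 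The passage to disconnected $\mc G$ is then a short Clifford-theoretic step, as in your first paragraph. So your outline is missing exactly the ingredient---functoriality of the Lusztig--Yun equivalence under $\mc G\hookrightarrow\mc G_c$---that pins the blocks together.
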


This can be regarded as a Jordan decomposition for representations of the possibly disconnected
reductive group $G = \mc G (\F_q)$. It existed already when $\mc G$ is connected \cite{Lus2},
but even in that case Theorem \ref{thm:B} adds canonicity.\\

We expect that our results will have applications to depth zero representations of reductive
$p$-adic groups. The main reason is that by \cite{MoPr} every supercuspidal depth zero
representation of such a group, say $M$, is obtained from an irreducible representation of
a group of the form $\mc G (\F_q)$. In that setting $\mc G (\F_q)$ arises as a quotient of 
the $M$-stabilizer of a vertex in the reduced Bruhat--Tits building of $M$.

In the recent preprint \cite{Fuj}, depth zero supercuspidal representations of simple $p$-adic
groups were analysed, with several techniques among which some version of a Jordan decomposition
for $\Irr (\mc G (\F_q))$. With Theorem \ref{thm:B} it should be possible to make the results in 
\cite[\S 4]{Fuj} applicable in larger generality.

\renewcommand{\theequation}{\arabic{section}.\arabic{equation}}
\numberwithin{equation}{section}

\section{Connected groups} 

We start by recalling a part of the setup and the relevant results from \cite{LuYu1,LuYu}.
Let $\mc G$ be a connected reductive group defined over the algebraic closure of a finite
field $\F_q$. Let $\ell$ be a prime number different from $p = \mr{char}(\F_q)$. All our
representations will be on vector spaces over $\overline{\Q_\ell}$.

Let $\epsilon$ be the action of a geometric Frobenius element $\Fr \in 
\mr{Gal}(\overline{\F_q} / \F_q)$ for some rational structure of $\mc G$. We denote the 
corresponding $\F_q$-group by $\mc G^\epsilon$, and we write
\[
G^\epsilon = \mc G^\epsilon (\F_q) = \mc G \big( \overline{\F_q} \big)^\epsilon .
\]
We fix an $\epsilon$-stable maximal torus $\mc T$ in $\mc G$, so $\mc T^\epsilon$ is a maximal
$\F_q$-torus in $\mc G^\epsilon$. Let $\mc B$ be an $\epsilon$-stable Borel subgroup of $\mc G$
containing $\mc T$ and let $\Phi^+$ be the corresponding set of positive roots in
$\Phi = \Phi (\mc G,\mc T)$. We write $W = W(\mc G,\mc T)$ and for each $w \in W$ we fix
a representative $\dot{w} \in N_{\mc G}(\mc T)$. 

Let $\mc L$ be a rank one character sheaf on $\mc T$, and assume that $W \mc L$ is 
$\epsilon$-stable. Whenever $w \epsilon \mc L = \mc L$, $\mc L$ can be regarded as a character 
sheaf on $\mc T^{w \epsilon}$, stable under the Frobenius action $\mr{Ad}(w) \circ \epsilon$ of
that $\F_q$-torus. Then $\mc L$ is equivalent to the data of a character
$T^{w \epsilon} \to \overline{\Q_\ell}^\times$.

The orbit $W \mc L$ corresponds to a Fr-stable semisimple conjugacy class in the 
reductive $\F_q$-group $\mc G^\vee$ dual to $\mc G$ \cite[\S 5]{DeLu}.
Deligne--Lusztig theory yields the category $\Rep_{W \mc L}(G^\epsilon)$ generated
by irreducible $G^\epsilon$-representations with semisimple parameter $W \mc L$. 
The set of irreducible representations in $\Rep_{W \mc L}(G^\epsilon)$ is 
called the geometric series with parameter $\mc L$ or $W \mc L$.

The stabilizer of $\mc L$ in $W$ can be written as 
\[
W_{\mc L} = W_{\mc L}^\circ \rtimes \Omega_{\mc L} .
\]
Here $W_{\mc L}^\circ$ is the Weyl group of the root system 
\[
\Phi_{\mc L} = \{ \alpha \in \Phi : (\alpha^\vee)^* \mc L \text{ is trivial} \}
\] 
and $\Omega_{\mc L}$ is the $W_{\mc L}$-stabilizer of $\Phi_{\mc L}^+ = \Phi^+ \cap \Phi_{\mc L}$.
Let $\mc H$ be the connected reductive $\overline{\F_q}$-group with maximal torus $\mc T$ and 
root system $\Phi_{\mc L}$. It is called the endoscopic group of $\mc G$ associated to $\mc L$. 
It can be endowed with a pinning relative to $\mc G$, as in \cite[\S 10.2]{LuYu}.
Consider the set
\[
\mf B_{\mc L} = W_{\mc L}^\circ \, \backslash \{ w \in W : w \epsilon \mc L = \mc L \} .
\]
The group $\Omega_{\mc L} \cong W_{\mc L}^\circ \, \backslash W_{\mc L}$ acts on $\mc B_{\mc L}$ by 
\begin{equation}\label{eq:E.26}
\mr{Ad}_\epsilon (\omega) w = \omega w \epsilon (\omega)^{-1}.
\end{equation}
Any other $\epsilon$-stable maximal torus in $\mc G$ has the form $g \mc T g^{-1}$ for some $g \in 
\mc G$. There are bijections
\[
\begin{array}{cccccc}
W_{\mc L} & \to & W_{g \mc L} \, : & w & \mapsto & g w g^{-1} , \\
\mf B_{\mc L} & \to & \mf B_{g \mc L} \, : & w & \mapsto & g w \epsilon (g)^{-1} ,
\end{array}
\]
which induce a bijection 
\[
\mf B_{\mc L} / \mr{Ad}_\epsilon (\Omega_{\mc L}) \to 
\mf B_{g \mc L} / \mr{Ad}_\epsilon (\Omega_{g \mc L}) .
\]
The data $(\mc L, \mc T, \beta)$ are considered equivalent to $(g \mc L, g \mc T g^{-1}, 
g \beta \epsilon (g)^{-1})$, and all the below constructions from these two sets of data related
by conjugation by $g$.

Each class $\beta \in \mf B_{\mc L}$ contains a unique element $w^\beta$ of minimal length,
which sends $\Phi_{\mc L}^+$ to $\Phi^+$. As explained in \cite[\S 12.1]{LuYu}, the relative
pinning and the lift $\dot{w}^\beta$ determine an automorphism $\sigma_{\beta \epsilon}$ of 
$(\mc H, \mc T, \Phi_{\mc L}^+)$, whose action on $\mc T$ and on the root subgroups in
$\mc H$ corresponds to the action of $\mr{Ad}(\dot{w}^\beta) \circ \epsilon$ on $\mc T$ 
and on the root subgroups in $\mc G$. This provides a Frobenius action on $\mc H$, and hence 
a $\F_q$-form $\mc H^{\sigma_{\beta \epsilon}}$ of $\mc H$. Up to a canonical isomorphism,
it does not depend on the choice of $\dot{w}^\beta$.

Next we want to define $\Omega_{\mc L}$-equivariant structures on $\bigoplus_{\beta \in 
\mf B_{\mc L}} \Rep_1 (H^{\sigma_{\beta \epsilon}})$. For every $\pi_\beta \in 
\Rep_1 (H^{\sigma_{\beta \epsilon}})$ and every $\omega \in \Omega_{\mc L}$ we have 
\[
\mr{Ad}(\dot \omega) \pi_\beta \in \Rep_1 \big( H^{\mr{Ad}(\dot \omega) \sigma_{\beta \epsilon} 
\mr{Ad}(\dot \omega)^{-1}} \big).
\]
For another representatives $\ddot{\omega}$, $\mr{Ad}(\ddot \omega) \pi_\beta$ differs from
$\mr{Ad}(\dot \omega) \pi_\beta$ by a canonical isomorphism, namely $\mr{Ad}(\ddot{\omega}
\dot{\omega}^{-1})$ with $\ddot{\omega} \dot{\omega}^{-1} \in \mc T$. Lang's theorem 
ensures the existence of an isomorphism
\[
H^{\mr{Ad}(\dot \omega) \sigma_{\beta \epsilon} \mr{Ad}(\dot \omega)^{-1}} \cong
H^{\sigma_{\mr{Ad}_\epsilon (\omega) \beta \epsilon}} .
\]
It is conjugation by an element of $\mc T$, which is unique up to $T^{\sigma_{\mr{Ad}_\epsilon 
(\omega) \beta \epsilon}}$. Via this isomorphism we regard $\mr{Ad}(\dot \omega) \pi_\beta$ 
as an element of $\Rep_1 (H^{\sigma_{\mr{Ad}_\epsilon (\omega) \beta \epsilon}})$.
Equivalently, this construction means that we select a representative $n(\omega,\beta) \in 
N_{\mc G}(\mc T)$ for $\omega$ (unique up $T^{\sigma_{\mr{Ad}_\epsilon (\omega) \beta \epsilon}}$
from the left and up to $T^{\sigma_{\beta \epsilon}}$ from the right), such that 
\[
\sigma_{\mr{Ad}_\epsilon (\omega) \beta \epsilon} = 
\mr{Ad}(n(\omega,\beta)) \sigma_{\beta \epsilon} \mr{Ad}(n(\omega,\beta))^{-1} .
\]
Then we define
\[
\mr{Ad}(\omega) \pi_\beta = \mr{Ad}(n(\omega,\beta)) \pi_\beta \in 
\Rep_1 (H^{\sigma_{\mr{Ad}_\epsilon (\omega) \beta \epsilon}}) .
\]
Here $\mr{Ad}(\omega) \pi$ is independent of the choices, up to canonical isomorphisms
(namely inner automorphisms, from $T^{\sigma_{\mr{Ad}_\epsilon (\omega) \beta \epsilon}}$).
That defines an action of $\Omega_{\mc L}$ on\\ $\bigoplus_{\beta \in 
\mf B_{\mc L}} \Rep_1 (H^{\sigma_{\beta \epsilon}})$ modulo inner automorphisms. 

Now an $\Omega_{\mc L}$-equivariant structure on $\pi = \bigoplus_{\beta \in \mf B_{\mc L}}
\pi_\beta$ consists of a morphism from $\pi$ to $\mr{Ad}(\omega)\pi$ for each
$\omega \in \Omega_{\mc L}$, multiplicative in $\omega$ up to canonical isomorphisms.
To make that explicit we need, for each representative $n(\omega,\beta)$, a morphism
\begin{equation}\label{eq:E.28}
\pi (n(\omega,\beta)) : \pi_\beta \to \mr{Ad}(n(\omega,\beta)) \pi_\beta 
\end{equation}
such that:
\begin{itemize}
\item $\pi (n(1,\beta)) = \pi_\beta (n(1,\beta))$ for all 
$n(1,\beta) \in T^{\sigma_{\beta \epsilon}}$,
\item for all eligible $n(\omega',\mr{Ad}_\epsilon (\omega) \beta)$:
\[
\pi \big( n (\omega',\mr{Ad}_\epsilon (\omega) \beta) n (\omega,\beta) \big) = 
\pi \big( n (\omega',\mr{Ad}_\epsilon (\omega) \beta) \big) \, \pi \big( n (\omega,\beta) \big) .
\]
\end{itemize}
The category $\bigoplus_{\beta \in \mf B_{\mc L}} \Rep_1 (H^{\sigma_{\beta \epsilon}})$
enriched with $\Omega_{\mc L}$-equivariant structures is denoted 
\[
\Big( \bigoplus\nolimits_{\beta \in \mf B_{\mc L}} 
\Rep_1 (H^{\sigma_{\beta \epsilon}}) \Big)^{\Omega_{\mc L}} .
\]
The following result is a version of \cite[Corollary 12.7]{LuYu} without cells.

\begin{thm}\label{thm:E.1}
Let $\mc G^\epsilon$ be a connected reductive $\F_q$-group and let $W \mc L$ be an 
$\epsilon$-stable orbit of character sheaves on $\mc T$. There is a canonical 
equivalence of categories
\[
\Rep_{W \mc L} (G^\epsilon) \cong \Big( \bigoplus\nolimits_{\beta \in \mf B_{\mc L}} 
\Rep_1 (H^{\sigma_{\beta \epsilon}}) \Big)^{\Omega_{\mc L}} .
\]
\end{thm}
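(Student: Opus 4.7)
The theorem is advertised as the version of \cite[Corollary 12.7]{LuYu} ``without cells'', so the natural route is to take the cell-indexed equivalences from \cite{LuYu} and sum them. The category of unipotent representations of $H^{\sigma_{\beta\epsilon}}$ decomposes canonically into Lusztig families, which via Kazhdan--Lusztig theory correspond to two-sided cells $c$ of $W_{\mc L}^\circ$; the category $\Rep_{W\mc L}(G^\epsilon)$ admits a parallel decomposition into cell-subcategories $\Rep^c_{W\mc L}(G^\epsilon)$ (this is the decomposition that underlies the cell-by-cell formulation of \cite[Cor.~12.7]{LuYu}). For each such $c$, that corollary provides a canonical equivalence
\[
\Rep^c_{W\mc L}(G^\epsilon) \;\isom\; \Big( \bigoplus_{\beta \in \mf B_{\mc L}} \Rep^c_1(H^{\sigma_{\beta\epsilon}}) \Big)^{\Omega_{\mc L}}.
\]

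\textbf{Assembling the sum.} First I would verify that the $\Omega_{\mc L}$-action on $\bigoplus_\beta \Rep_1(H^{\sigma_{\beta\epsilon}})$ respects the cell labelling: if $\pi_\beta$ lies in the family indexed by $c$, then $\mr{Ad}(\omega)\pi_\beta$, viewed as an object of $\Rep_1(H^{\sigma_{\mr{Ad}_\epsilon(\omega)\beta\epsilon}})$, lies in the family indexed by the same $c$. This holds because the identification between $H^{\mr{Ad}(\dot\omega)\sigma_{\beta\epsilon}\mr{Ad}(\dot\omega)^{-1}}$ and $H^{\sigma_{\mr{Ad}_\epsilon(\omega)\beta\epsilon}}$ is conjugation by an element of $\mc T$ (via Lang's theorem), and such inner automorphisms preserve the Kazhdan--Lusztig combinatorics defining families. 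Consequently the equivariant category on the right also decomposes as a direct sum over cells, and the direct sum of the \cite{LuYu} cell-wise equivalences is a well-defined equivalence of categories between $\Rep_{W\mc L}(G^\epsilon)$ and $\big(\bigoplus_\beta \Rep_1(H^{\sigma_{\beta\epsilon}})\big)^{\Omega_{\mc L}}$.

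\textbf{Main obstacle.} The delicate part is to check that the assembly is canonical and compatible with the equivariant structure \eqref{eq:E.28} globally, not merely cell by cell. Concretely, one must verify that the cell-wise equivalences supplied by \cite{LuYu} depend on the auxiliary choices (the lifts $\dot w^\beta$, the relative pinning, and the representatives $n(\omega,\beta)$) only up to canonical isomorphisms that are the same across all cells, and that the cocycle identity relating $\pi\big(n(\omega',\mr{Ad}_\epsilon(\omega)\beta) n(\omega,\beta)\big)$ to $\pi\big(n(\omega',\mr{Ad}_\epsilon(\omega)\beta)\big)\pi\big(n(\omega,\beta)\big)$ holds simultaneously on every cell summand. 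Both points should reduce to the canonicity built into \cite{LuYu}, but tracking the canonical identifications — so that the summed equivariant data genuinely matches the structure prescribed before the theorem — is where the argument needs the most care.
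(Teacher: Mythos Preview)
Your overall plan---decompose both sides by two-sided cells and invoke \cite[Corollary~12.7]{LuYu} cell by cell, then sum---is exactly the paper's route. However, there is a real gap in the middle step.

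You assert that if $\pi_\beta$ lies in the family attached to a cell $c$ of $W_{\mc L}^\circ$, then $\mr{Ad}(\omega)\pi_\beta$ lies in the \emph{same} family $c$. Your justification appeals only to the Lang-map part of the construction (conjugation by an element of $\mc T$), but the full map is $\mr{Ad}(n(\omega,\beta))$, and the factor $\mr{Ad}(\dot\omega)$ is an automorphism of $\mc H$ that need not be inner: $\omega\in\Omega_{\mc L}$ acts by conjugation on $W_{\mc L}^\circ$ and may permute its two-sided cells. In general $\mr{Ad}(\omega)\pi_\beta$ lands in the family attached to $\omega c$, not $c$. Consequently your statement of \cite[Corollary~12.7]{LuYu} is not quite right: what that corollary actually supplies, for a fixed cell $\mb c$, is
\[
\Rep_{W\mc L}^{[\mb c]}(G^\epsilon)\;\cong\;\bigoplus_{\beta\in\mf B_{\mb c}/\mr{Ad}_\epsilon(\Omega_{\mb c})}\Rep_1^{\mb c}(H^{\sigma_{\beta\epsilon}})^{\Omega_{\mb c,\beta}},
\]
where $\Omega_{\mb c}=\mr{Stab}_{\Omega_{\mc L}}(\mb c)$ and $\mf B_{\mb c}=\{\beta\in\mf B_{\mc L}: w^\beta\epsilon\text{ stabilizes }\mb c\}$---not the full $\Omega_{\mc L}$ and $\mf B_{\mc L}$.

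The paper bridges this discrepancy in two short moves you are missing. First, for $\beta\in\mf B_{\mc L}\setminus\mf B_{\mb c}$ the Frobenius $\sigma_{\beta\epsilon}$ does not fix $\mb c$, so $\Rep_1^{\mb c}(H^{\sigma_{\beta\epsilon}})=0$; this lets one harmlessly enlarge the index set to all of $\mf B_{\mc L}$. Second, one observes that $[\mb c]\cap(W_{\mc L}^\circ\times\{\mc L\})=\Omega_{\mc L}\mb c$, so the cell indexing on the $G^\epsilon$-side already corresponds to an $\Omega_{\mc L}$-orbit of cells on the $H$-side; replacing $\Rep_1^{\mb c}$ by $\Rep_1^{\Omega_{\mc L}\mb c}$ then promotes the $\Omega_{\mb c}$-equivariance to a genuine $\Omega_{\mc L}$-equivariance. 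After that the direct sum over $[\mb c]$ goes through exactly as you intend. The canonicity worry you flag as the ``main obstacle'' is not where the work lies; it is inherited directly from \cite{LuYu}.
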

\begin{proof}
Let $\mb c$ be a cell in $W_{\mc L}^\circ$ and let $[\mb c] \subset W \times W \mc L$ be
the associated two-sided cell, as in \cite[\S 11.4]{LuYu}. They are related by
\begin{equation}\label{eq:E.3}
[\mb c] \cap (W_{\mc L}^\circ \times \{\mc L\}) = \Omega_{\mc L} \mb c .
\end{equation}
By construction
\begin{equation}\label{eq:E.2}
\Rep_{W \mc L} (G^\epsilon) = \bigoplus\nolimits_{[\mb c]} \Rep_{W \mc L}^{[\mb c]} (G^\epsilon) . 
\end{equation}
Let $\Omega_{\mb c}$ be the stabilizer of $\mb c$ in $\Omega_{\mc L}$ and write
\[
\mf B_{\mb c} = \{ \beta \in \mf B_{\mc L} : w^\beta \epsilon \text{ stabilizes } \mb c \} .
\]
By \cite[Corollary 12.7]{LuYu} there are canonical equivalences of categories
\begin{equation}\label{eq:E.1}
\Rep_{W \mc L}^{[\mb c]} (G^\epsilon) \cong 
\bigoplus_{\beta \in \mf B_{\mb c} / \mr{Ad}_\epsilon (\Omega_{\mb c})} \Rep_1^{\mb c}
\big( H^{\sigma_{\beta \epsilon}} \big)^{\Omega_{\mb c,\beta}} \cong
\Big( \bigoplus_{\beta \in \mf B_{\mb c}} \Rep_1^{\mb c} \big( H^{\sigma_{\beta \epsilon}} \big)
\Big)^{\Omega_{\mc L}} .
\end{equation}
When $\beta \in \mf B_{\mc L} \setminus \mf B_{\mb c}$, $H^{\sigma_{\beta \epsilon}}$
has no nonzero representations associated to the cell $\mb c$ (because the cell of an
irreducible representation is unique). Hence the right hand side of \eqref{eq:E.1} can
be rewritten as
\[
\Big( \bigoplus\nolimits_{\beta \in \mf B_{\mc L}} \Rep_1^{\Omega_{\mc L} \mb c} 
\big( H^{\sigma_{\beta \epsilon}} \big) \Big)^{\Omega_{\mc L}} .
\]
Now we take the direct sum over all two-sided cells $[\mb c]$ and with \eqref{eq:E.2}
and \eqref{eq:E.3} we obtain
\[
\Rep_{W \mc L} (G^\epsilon) \cong \bigoplus_{[\mb c]} \Big( \bigoplus_{\beta \in 
\mf B_{\mc L}} \Rep_1^{\Omega_{\mc L} \mb c} \big( H^{\sigma_{\beta \epsilon}} \big) 
\Big)^{\Omega_{\mc L}} = \Big( \bigoplus_{\beta \in \mf B_{\mc L}} 
\Rep_1 \big( H^{\sigma_{\beta \epsilon}} \big) \Big)^{\Omega_{\mc L}} . \qedhere
\]
\end{proof}

We turn our attention to rational series for $G^\epsilon$. Let $\mc G^\vee$ be
the dual group of $\mc G$. It is endowed with the $\F_q$-structure from the Frobenius 
action dual to $\epsilon$, which we shall also denote $\epsilon$. 
Recall that every rational series is parametrized by the (rational) conjugacy 
class of a semisimple element $s$ of $G^{\vee \epsilon} = \mc G^\vee (\F_q)$. 
In terms of $(\mc L, \mc T^\epsilon)$, the condition $s \in G^{\vee \epsilon}$ means
that $\mc L$ must be $\epsilon$-stable. That can always be arranged:

\begin{lem}\label{lem:E.5}
Suppose that $W \mc L$ is $\epsilon$-stable. Then $(\mc L,\mc T)$ is $\mc G$-conjugate
to an $\epsilon$-stable pair $(\mc L_w, \mc T_w)$ and $\Rep_{W \mc L}(G^\epsilon) =
\Rep_{W(\mc G,\mc T_w) \mc L_w} (G^\epsilon)$.
\end{lem}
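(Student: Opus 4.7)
The plan is to pick a $w\in W$ witnessing the $\epsilon$-stability of $W\mc L$, and then use Lang's theorem in the connected group $\mc G$ to absorb $w$ into a conjugation. Since $W\mc L$ is $\epsilon$-stable and contains $\mc L$, there exists $w\in W$ with $w\epsilon\mc L=\mc L$. I would fix a lift $\dot w\in N_{\mc G}(\mc T)$ and, using connectedness of $\mc G$, apply Lang's theorem to obtain $g\in\mc G$ with $g^{-1}\epsilon(g)=\dot w$. The candidate $\epsilon$-stable pair is then $\mc T_w:=g\mc T g^{-1}$ and $\mc L_w:=\mr{Ad}(g)_*\mc L$.

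The two stability checks are short. For the torus, $\epsilon(\mc T_w)=\epsilon(g)\mc T\epsilon(g)^{-1}=(g\dot w)\mc T(g\dot w)^{-1}=g\mc T g^{-1}$, using $\epsilon$-stability of $\mc T$ and that $\dot w$ normalizes $\mc T$. For the sheaf, conjugation by $g$ intertwines the Frobenius $\epsilon$ on $\mc T_w$ with $\mr{Ad}(g^{-1}\epsilon(g))\circ\epsilon=\mr{Ad}(\dot w)\circ\epsilon$ on $\mc T$; the passage in the paper just above the lemma recognizes $w\epsilon\mc L=\mc L$ as precisely the statement that $\mc L$ is fixed by this Frobenius, so $\mc L_w$ is $\epsilon$-stable.

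The equality $\Rep_{W\mc L}(G^\epsilon)=\Rep_{W(\mc G,\mc T_w)\mc L_w}(G^\epsilon)$ then follows from the equivariance remark preceding Theorem \ref{thm:E.1}: conjugation by $g$ carries $(W,W\mc L)$ to $(W(\mc G,\mc T_w),W(\mc G,\mc T_w)\mc L_w)$, and the paper declares such $\mc G$-conjugate data equivalent, so they determine the same geometric series. The only conceptual point requiring care is that ``$\epsilon$-stability of $\mc L_w$'' must be interpreted with respect to the Frobenius on $\mc T_w$, not on $\mc T$; once $\mr{Ad}(g)$ is correctly seen to intertwine this Frobenius with $\mr{Ad}(\dot w)\circ\epsilon$ on $\mc T$, no substantive obstacle remains and the rest is bookkeeping.
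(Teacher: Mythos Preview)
Your argument is correct and follows essentially the same route as the paper: choose $w\in W$ with $w\epsilon\mc L=\mc L$, use Lang's theorem to find $g$ with $g^{-1}\epsilon(g)$ representing $w$, and set $(\mc L_w,\mc T_w)=(g\mc L,g\mc T g^{-1})$; the stability verifications are identical. The only minor difference is in the last step: the paper justifies the equality of geometric series by observing that $W\mc L$ and $W(\mc G,\mc T_w)\mc L_w$ correspond to the same geometric conjugacy class in $G^{\vee\epsilon}$, rather than by invoking the conjugation-equivariance remark (which concerns the auxiliary data $\mf B_{\mc L}$ more than the series themselves), but the content is the same.
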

\begin{proof}
Select $w \in W$ such that $w \epsilon \mc L = \mc L$. By the surjectivity of Lang's map
on $\mc G$ \cite[Theorem 1.4.8]{GeMa}, there exists $g_w \in \mc G$ such that 
$g_w^{-1} \epsilon (g_w)$ represents $w \in N_{\mc G}(\mc T) / \mc T$. Then
\begin{align*}
& g_w \mc T g_w^{-1} = g_w w \mc T w^{-1} g_w^{-1} = \epsilon (g_w) \mc T \epsilon (g_W)^{-1} 
= \epsilon (g_w) \epsilon (\mc T) \epsilon (g_w)^{-1} = \epsilon (g_w \mc T g_w^{-1}) , \\
& g_w \mc L = g_w w \epsilon \mc L = \epsilon (g_w) \epsilon \mc L = \epsilon g_w \mc L , 
\end{align*}
so $(\mc L_w,\mc T_w) := (g_w \mc L, g_w \mc T g_w^{-1})$ is $\epsilon$-stable. The equality
of categories holds because $W \mc L$ and $W(\mc G,\mc T_w) \mc L_w$ correspond to the 
same geometric conjugacy in $G^{\vee \epsilon}$.
\end{proof}

With Lemma \ref{lem:E.5} in mind, we assume that $(\mc L,\mc T)$ is $\epsilon$-stable. 
Recall from \cite[\S 5.2]{DeLu} or \cite[Corollary 2.5.14]{GeMa} that there is a natural
bijection between 
\begin{itemize}
\item the set of $G^\epsilon$-conjugacy classes of $\epsilon$-stable pairs 
$(\mc L,\mc T)$,
\item the set of $G^{\vee \epsilon}$-conjugacy classes of $\epsilon$-stable
pairs in $G^{\vee \epsilon}$ formed by a maximal torus and an element thereof. 
\end{itemize}
We record that as
\begin{equation}\label{eq:E.22}
(\mc L,\mc T) \longleftrightarrow (s,\mc T^\vee).
\end{equation} 
The intersection of the geometric conjugacy class $\mr{Ad}(\mc G^\vee) s$ with $\mc T^\vee$ 
is $W s$ with $W \cong W(\mc G^\vee,\mc T^\vee)$. By \cite[\S 5]{DeLu} or \cite[Propositions 
2.4.28, 2.5.5 and p. 155]{GeMa}, this corresponds 
precisely to $(W \mc L,\mc T)$. Then $\Rep_{W \mc L}(G^\epsilon)$ is a direct sum of 
finitely many rational series, which we want to parametrize in terms of $\mc L$.

\begin{lem}\label{lem:E.6}
The quotient set $\mf B_{\mc L} / \mr{Ad}_\epsilon (\Omega_{\mc L})$ naturally parametrizes the 
$G^{\vee \epsilon}$-conju\-ga\-cy classes in $(\mr{Ad}(\mc G^\vee) s)^\epsilon$. 
The parametrization can be realized as follows. For $w \in W_{\mc L}$ representing 
$\beta \in \mf B_{\mc L}$, find $h_w \in \mc G^\vee$ such that $h_w^{-1} \epsilon (h_w)$ 
represents $w$ as element of $W \cong N_{\mc G^\vee}(\mc T^\vee) / \mc T^\vee$. Then the 
bijection sends $\beta = [w] \in \mf B_{\mc L} / \mr{Ad}_\epsilon (\Omega_{\mc L})$ to
$\mr{Ad}(G^{\vee \epsilon}) (h_w s h_w^{-1})$.
\end{lem}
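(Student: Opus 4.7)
The plan is to factor the asserted bijection through the classical Lang--Steinberg parametrization of rational conjugacy classes inside a geometric one. Since we have assumed $\mc L$ is $\epsilon$-stable, the defining set of $\mf B_{\mc L}$ is simply $W_{\mc L}$, and the decomposition $W_{\mc L} = W_{\mc L}^\circ \rtimes \Omega_{\mc L}$ identifies $\mf B_{\mc L} = W_{\mc L}^\circ \backslash W_{\mc L}$ with $\Omega_{\mc L}$. Under this identification the formula \eqref{eq:E.26} becomes $\epsilon$-twisted conjugation of $\Omega_{\mc L}$ on itself, so $\mf B_{\mc L}/\mr{Ad}_\epsilon(\Omega_{\mc L})$ identifies canonically with the nonabelian $H^1(\langle \epsilon \rangle, \Omega_{\mc L})$.

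Next I transfer to the dual side. The Deligne--Lusztig duality \eqref{eq:E.22} identifies $\Phi_{\mc L}$ with the root system of $Z_{\mc G^\vee}(s)^\circ$ relative to $\mc T^\vee$, and $W_{\mc L}$ with the full stabilizer $W(Z_{\mc G^\vee}(s), \mc T^\vee)$ of $s$ in $W(\mc G^\vee, \mc T^\vee)$. In particular $\Omega_{\mc L} \cong \pi_0(Z_{\mc G^\vee}(s)) =: A(s)$, and because $\mc L$ is $\epsilon$-stable this isomorphism intertwines the two Frobenius actions, so it descends to $H^1(\langle \epsilon \rangle, \Omega_{\mc L}) \cong H^1(\langle \epsilon \rangle, A(s))$. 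Finally, the classical Lang--Steinberg argument supplies the bijection $H^1(\langle \epsilon \rangle, A(s)) \cong (\mr{Ad}(\mc G^\vee) s)^\epsilon / \mr{Ad}(G^{\vee\epsilon})$: lifting $w \in W_{\mc L}$ to $\dot w \in Z_{\mc G^\vee}(s) \cap N_{\mc G^\vee}(\mc T^\vee)$ and using Lang's theorem in $\mc G^\vee$ to find $h_w$ with $h_w^{-1}\epsilon(h_w) = \dot w$, the element $h_w s h_w^{-1}$ is $\epsilon$-fixed precisely because $\dot w$ centralizes $s$. Chaining the three bijections yields the prescription in the statement.

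The main obstacle is the bookkeeping of independence from the various choices. One must verify that (i) replacing $w$ by $w_1 w$ with $w_1 \in W_{\mc L}^\circ$ leaves the $G^{\vee\epsilon}$-class of $h_w s h_w^{-1}$ unchanged, since a Lang preimage of a lift of $w_1$ sits in $Z_{\mc G^\vee}(s)^\circ$ and can be absorbed into $h_w$ on the left; (ii) any two choices of $h_w$ differ by left multiplication by an element of $G^{\vee\epsilon}$, hence produce $G^{\vee\epsilon}$-conjugate elements; and (iii) replacing $w$ by $\omega w \epsilon(\omega)^{-1}$ for $\omega \in \Omega_{\mc L}$ produces a $G^{\vee\epsilon}$-conjugate element, which again follows from Lang's theorem applied to a lift of $\omega$. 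Once these compatibilities are in place, injectivity and surjectivity of the map follow from the three displayed bijections, and its naturality with respect to the equivalence in Lemma \ref{lem:E.5} is automatic since both constructions transform by conjugation under the change of pair $(\mc L, \mc T)$.
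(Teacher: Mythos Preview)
Your proposal is correct and follows essentially the same route as the paper: both arguments reduce to the Lang--Steinberg parametrization of rational classes in a geometric one by $H^1(\epsilon,\pi_0(Z_{\mc G^\vee}(s)))$, together with Steinberg's identification $\pi_0(Z_{\mc G^\vee}(s))\cong W_{\mc L}^\circ\backslash W_{\mc L}\cong\Omega_{\mc L}$. The paper packages the Lang--Steinberg step as the Galois-cohomology exact sequence \eqref{eq:E.11} and then reads off \eqref{eq:E.12}--\eqref{eq:E.13}, whereas you spell out the well-definedness checks (i)--(iii) by hand; these are two presentations of the same computation.
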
 
\begin{proof}
As $\Fr (s) = s$, the $\mc G^\vee$-conjugacy class of $s$ is isomorphic to 
$\mc G^\vee / Z_{\mc G^\vee}(s)$ as $\F_q$-variety. Galois cohomology provides an exact sequence
\begin{equation}\label{eq:E.11}
Z_{\mc G^\vee}(s)^\Fr \to (\mc G^\vee)^\Fr \to \big( \mc G^\vee / Z_{\mc G^\vee}(s) \big)^\Fr
\to H^1 (\Fr, Z_{\mc G^\vee}(s)) \to H^1 (\Fr ,\mc G^\vee) .
\end{equation}
Here $H^1 (\Fr ,\mc G^\vee)$ is a short notation for the cohomology of 
$\mr{Gal}(\overline{\F_q}/\F_q)$ with coefficients in $\mc G^\vee$. By the surjectivity of
Lang's map on connected $\F_q$-groups, $H^1 (\Fr ,\mc G^\vee)$ is trivial and 
\[
H^1 (\Fr, Z_{\mc G^\vee}(s)) \cong H^1 \big( \Fr, \pi_0 (Z_{\mc G^\vee}(s)) \big) . 
\]
Thus \eqref{eq:E.11} says that the set we are looking for is parametrized naturally by\\
$H^1 \big( \Fr, \pi_0 (Z_{\mc G^\vee}(s)) \big)$. This bijection sends 
\begin{equation}\label{eq:E.12}
g s g^{-1} \in (\mr{Ad}(\mc G^\vee) s)^\Fr \quad \text{to} \quad 
[\Fr \mapsto g^{-1} \Fr (g)] \in H^1 (\Fr, Z_{\mc G^\vee}(s)).
\end{equation}
By Steinberg's description of the centralizers of semisimple elements in reductive groups
\cite{Ste}, $\pi_0 (Z_{\mc G^\vee}(s)) \cong W_s^\circ \backslash W_s$, where $W_s^\circ
\subset W \cong W(\mc G^\vee,\mc T^\vee)$ is generated by the reflections $s_\alpha$ with
$\alpha^\vee (s) = 1$. In terms of $(\mc L,\mc T)$, this becomes 
\begin{equation}\label{eq:E.17}
\pi_0 (Z_{\mc G^\vee}(s)) \cong W_s^\circ \backslash W_s \cong 
W_{\mc L}^\circ \backslash W_{\mc L} = \mc B_{\mc L} = 
W_{\mc L}^\circ \backslash W_{\mc L}^\circ \rtimes \Omega_{\mc L} \cong \Omega_{\mc L}.
\end{equation}
It follows that $H^1 \big( \Fr, \pi_0 (Z_{\mc G^\vee}(s)) \big)$ is naturally isomorphic to 
\begin{equation}\label{eq:E.13}
H^1 (\Fr ,\Omega_{\mc L}) = (\Omega_{\mc L})_\Fr = 
\mf B_{\mc L} / \mr{Ad}_\epsilon (\Omega_{\mc L}) .
\end{equation}
The map from \eqref{eq:E.13} to conjugacy classes of $s$ is essentially the inverse of
\eqref{eq:E.12}.
\end{proof}

Lemmas \ref{lem:E.5} and \ref{lem:E.6} provide a bijection between 
\begin{itemize}
\item the set of semisimple conjugacy classes in $G^{\vee \epsilon}$,
\item the $W$-orbits of pairs $(\mc L,\beta)$, where $\mc L$ is a character sheaf on $\mc T$ 
with $\epsilon$-stable $W$-orbit and $\beta \in \mf B_{\mc L} / \mr{Ad}_\epsilon (\mc L)$,
\item triples $(\mc L', \mc T', \beta')$ where $\mc T'$ is an $\epsilon$-stable maximal torus
in $\mc G$, $(\mc L', \beta')$ is as in the previous bullet and any such triple of the form 
$(g \mc L', g \mc T' g^{-1}, g \beta' \epsilon (g)^{-1})$ with $g \in \mc G^\circ$ 
is considered as equivalent to $(\mc L', \mc T', \beta')$.
\end{itemize}
Let us make it more explicit, starting from $W (\mc L,\beta)$. As in Lemma \ref{lem:E.5},
pick $g \in \mc G$ such that $(g \mc L, g \mc T g^{-1})$ is $\epsilon$-stable, and
replace $\beta$ by $g \beta \epsilon (g)^{-1} \in \mf B_{g \mc L}$. Associate $s \in G^{\vee 
\epsilon}$ to $(g \mc L, g \mc T g^{-1})$ by \eqref{eq:E.22}. From $s$ and 
$g \beta \epsilon (g)^{-1}$, Lemma \ref{lem:E.6} produces $s_\beta = h_w s h_w^{-1}$,
where $w$ represents $g \beta \epsilon (g)^{-1}$. We record this as 
\begin{equation}\label{eq:E.23}
W (\mc L, \beta) \longleftrightarrow s_\beta .
\end{equation}
The explicit description of the correspondence 
\eqref{eq:E.22} in \cite[\S 2.5.12 and Corollary 2.4.14]{GeMa} is in terms of elements 
$h_w$ and $g_w$ as in the proof of Lemma \ref{lem:E.5}. Consequently 
\begin{equation}\label{eq:E.20}
(s_\beta ,h_w \mc T^\vee h_w^{-1}) \text{ corresponds to } (\mc L_w, \mc T_w) .
\end{equation}
We denote the rational series (in $\Rep (G^\epsilon)$) associated to $s_\beta \in 
G^{\vee \epsilon}$ by $\Rep_{s_\beta} (G^\epsilon)$. By the correspondence \eqref{eq:E.23}
we may write $\Rep_{s_\beta}(G^\epsilon) = \Rep_{W(\mc L,\beta)} (G^\epsilon)$. Lemma 
\ref{lem:E.6} shows how to write a geometric series as a direct sum of rational 
series, with $s_\beta$ as in \eqref{eq:E.23}:
\begin{equation}\label{eq:E.14}
\Rep_{W\mc L}(G^\epsilon) = \bigoplus_{\beta \in \mf B_{\mc L} / 
\mr{Ad}_\epsilon (\Omega_{\mc L})} \Rep_{W(\mc L,\beta)}(G^\epsilon) =
\bigoplus_{\beta \in \mf B_{\mc L} / 
\mr{Ad}_\epsilon (\Omega_{\mc L})} \Rep_{s_\beta}(G^\epsilon) ,
\end{equation}
Let $\mc G \to \mc G_c$ be a regular embedding of reductive $\F_q$-groups \cite[\S 1.7]{GeMa},
so $Z(\mc G_c)$ is connected and $\mc G_\der = \mc G_{1,\der}$. As is common, we will analyse
series for $G^\epsilon$ in terms of those for $G_c^\epsilon$. Let $\mc T_c \subset 
\mc G_c$ be the maximal torus with $\mc T_c \cap \mc G = \mc T$. We fix an extension $\mc L_c$
of $\mc L$ to a character sheaf on $\mc T_c^\epsilon$, and we let $s_c \in T_c^\epsilon$ be
the corresponding element. The canonical quotient map $\mc G_c^\vee \to \mc G^\vee$ sends
$\mc T_c^\vee \to \mc T^\vee$ and $s_c$ to $s$. The group $W_c = W(\mc G_c,\mc T_c)$ can
be identified with $W$. Since $\mc G_c$ has connected centre, $\mc H_c = Z_{\mc G_c}(s_c)$
is connected and
\[
W_{s_c} = W_{s_c}^\circ \cong W_{\mc L_c} = W_{\mc L_c}^\circ .
\]
The rational and geometric series for $G_c^\epsilon$ coincide, and by \cite{Lus2}:
\begin{equation}\label{eq:E.15}
\Rep_s (G^\epsilon) = \Res_{G^\epsilon}^{G_c^\epsilon} \big( \Rep_{s_c}(G_c^\epsilon) \big)
= \Res_{G^\epsilon}^{G_c^\epsilon} \big( \Rep_{W_{\mc L_c}}(G_c^\epsilon) \big) .
\end{equation}
Since $\mf B_{\mc L_c} = \{1\}$ and $\Omega_{\mc L_c} = \{1\}$, Theorem \ref{thm:E.1} for
$G_c^\epsilon$ simplifies to 
\begin{equation}\label{eq:E.16}
\Rep_{W \mc L_c}(G_c^\epsilon) \cong \Rep_1 (H_c^\epsilon) .
\end{equation}
The group $\mc H_c$ is generated by $\mc T_c$ and the root subgroups $U_\alpha$ with
$\alpha^\vee (s_c) = 1$. The latter condition depends only on $s$, so these are precisely
the roots in $\Phi_{\mc L}$ and $\mc H_c$ is the full preimage of $\mc H$ in $\mc G_c$.

\begin{thm}\label{thm:E.7}
Let $(\mc L,\mc T)$ be $\epsilon$-stable, corresponding to $(s,\mc T^\vee)$ with 
$s \in G^{\vee \epsilon}$. There is a canonical equivalence of categories 
\[
\Rep_s (G^\epsilon) = \Rep_{W(\mc L,1)} (G^\epsilon) \cong 
\Rep_1 (H^\epsilon )^{\Omega_{\mc L}^\epsilon} ,
\]
where $\Omega_{\mc L}^\epsilon \cong \pi_0 (Z_{\mc G^\vee}(s))^\epsilon$.
\end{thm}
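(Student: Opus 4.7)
The plan is to combine Theorem \ref{thm:E.1} with the decomposition \eqref{eq:E.14} of the geometric series into rational series, and to extract the summand corresponding to $\beta = 1 \in \mf B_{\mc L}$. By Lemma \ref{lem:E.6}, the $\mr{Ad}_\epsilon(\Omega_{\mc L})$-orbit of $1$ maps to the rational conjugacy class of $s_1 = s$, so the ``$\beta = 1$'' summand in \eqref{eq:E.14} is precisely $\Rep_s(G^\epsilon) = \Rep_{W(\mc L,1)}(G^\epsilon)$. On the other hand, under Theorem \ref{thm:E.1} the right-hand side decomposes canonically over $\mr{Ad}_\epsilon(\Omega_{\mc L})$-orbits in $\mf B_{\mc L}$, since an $\Omega_{\mc L}$-equivariant structure on a direct sum splits naturally into pieces supported on single orbits.

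The main obstacle is to verify that these two orbit decompositions are matched by the equivalence of Theorem \ref{thm:E.1}: namely, that the subcategory of $(\bigoplus_{\beta} \Rep_1(H^{\sigma_{\beta\epsilon}}))^{\Omega_{\mc L}}$ consisting of objects supported on the orbit of a given $\beta$ corresponds to $\Rep_{s_\beta}(G^\epsilon)$. This requires tracing through the parameter correspondences \eqref{eq:E.20}--\eqref{eq:E.23}, together with the cell-by-cell equivalences of \cite[Cor. 12.7]{LuYu} underlying Theorem \ref{thm:E.1}, and checking that they preserve the rational-series labelling.

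With this matching in place, I would invoke a standard equivariant-category principle: if $\Omega$ acts on a family $(\mc A_\beta)$ compatibly with its action on the index set, and $O = \Omega \cdot \beta_0$ is a single orbit with stabilizer $\Omega_{\beta_0}$, then
\[
\Big( \bigoplus\nolimits_{\beta \in O} \mc A_\beta \Big)^\Omega \cong \mc A_{\beta_0}^{\Omega_{\beta_0}} ,
\]
since an equivariant object supported on $O$ is determined by its value at $\beta_0$ together with an $\Omega_{\beta_0}$-equivariant structure. Applied to $\beta_0 = 1$, whose stabilizer under $\mr{Ad}_\epsilon$ is $\{\omega : \omega = \epsilon(\omega)\} = \Omega_{\mc L}^\epsilon$, and using that the minimal-length representative $w^1 = 1$ makes $\sigma_\epsilon$ equal to the restriction of $\epsilon$ to $\mc H$ (so $H^{\sigma_\epsilon} = H^\epsilon$), this yields the canonical equivalence $\Rep_s(G^\epsilon) \cong \Rep_1(H^\epsilon)^{\Omega_{\mc L}^\epsilon}$.

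Finally, the identification $\Omega_{\mc L}^\epsilon \cong \pi_0(Z_{\mc G^\vee}(s))^\epsilon$ is obtained by taking $\epsilon$-fixed points in the isomorphism $\pi_0(Z_{\mc G^\vee}(s)) \cong \Omega_{\mc L}$ of \eqref{eq:E.17}, after observing that this isomorphism intertwines the Frobenius action on the dual centralizer with the natural $\epsilon$-action on $\Omega_{\mc L}$ arising from the $\epsilon$-stability of $\mc L$. Canonicity is preserved throughout, since Theorem \ref{thm:E.1} and each of the steps above are canonical.
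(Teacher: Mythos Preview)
Your overall strategy is right and in fact coincides with the paper's: decompose both sides of Theorem \ref{thm:E.1} into pieces indexed by $\mr{Ad}_\epsilon(\Omega_{\mc L})$-orbits in $\mf B_{\mc L}$, match them, and then read off the $\beta=1$ piece using the stabilizer computation $\Omega_{\mc L,1}=\Omega_{\mc L}^\epsilon$. The equivariant-category principle you invoke is exactly the passage from the middle to the right of \eqref{eq:E.1}, applied in reverse.

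The gap is the step you yourself flag as ``the main obstacle'': you assert that the equivalence of Theorem \ref{thm:E.1} carries $\Rep_{s_\beta}(G^\epsilon)$ to the piece supported on the $\mr{Ad}_\epsilon(\Omega_{\mc L})$-orbit of $\beta$, but you do not actually prove it. Saying that this ``requires tracing through'' the parameter correspondences and the cell-by-cell equivalences of \cite[Cor.~12.7]{LuYu} is not a proof; those equivalences are constructed sheaf-theoretically and do not come with a visible rational-series label. The paper supplies a genuine argument here: choose a regular embedding $\mc G \hookrightarrow \mc G_c$ with connected centre. Then $\Omega_{\mc L_c}=1$, so Theorem \ref{thm:E.1} for $G_c^\epsilon$ collapses to $\Rep_{W\mc L_c}(G_c^\epsilon)\cong\Rep_1(H_c^\epsilon)$, and by \cite{Lus2} one has $\Rep_s(G^\epsilon)=\Res^{G_c^\epsilon}_{G^\epsilon}\Rep_{s_c}(G_c^\epsilon)$. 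The functoriality of the Lusztig--Yun construction under restriction, made explicit in \cite[\S 12.10]{LuYu}, then shows that the corresponding functor on the endoscopic side is restriction $\Rep_1(H_c^\epsilon)\to\Rep_1(H^\epsilon)$ followed by endowing with an $\Omega_{\mc L,1}$-structure, landing entirely in the $\beta=1$ summand. Repeating this for each $\beta$ (after conjugating by $g_w$ as in Lemma \ref{lem:E.5}) and observing that the direct sum recovers Theorem \ref{thm:E.1} forces each piece to be an equivalence. Without this regular-embedding input (or some substitute), your matching step is unjustified.
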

\begin{proof}
We will use the functorial properties of the constructions of Lusztig and Yun. The functor
\[
\Res_{G^\epsilon}^{G_c^\epsilon} : \Rep_{W \mc L_c}(G_c^\epsilon) \to \Rep_{W \mc L}(G^\epsilon)
\]
corresponds to restriction of sheaves from $\mc G_c$ to $\mc G$ in \cite{LuYu}. Via 
\eqref{eq:E.16} and Theorem \ref{thm:E.1}, it becomes a functor
\begin{equation}\label{eq:E.18}
\Rep_1 (H_c^\epsilon) \longrightarrow
\bigoplus\nolimits_{\beta \in \mf B_{\mc L} / \mr{Ad}_\epsilon (\Omega_\mc L)} 
\Rep_1 \big( H^{\sigma_{\beta \epsilon}} \big)^{\Omega_{\mc L,\beta}} .
\end{equation}
From \cite[\S 12.10]{LuYu} one can see what \eqref{eq:E.18} does. Namely, first it restricts
representations of $H_c^\epsilon$ to $H^\epsilon$, and then it endows them with some
equivariant structure for $\Omega_{\mc L,\beta} = \mr{Stab}_{\Omega_{\mc L}}(\beta)$ with
$\beta = 1 \in \mf B_{\mc L}$. In particular the image of \eqref{eq:E.18} lies entirely in
the summand $\beta = 1$ of the codomain. Together with \eqref{eq:E.15}, it follows that the
image of $\Rep_s (G^\epsilon)$ under Theorem \ref{thm:E.1} is contained in 
$\Rep_1 (H^\epsilon)^{\Omega_{\mc L,1}}$.

Now we consider another $\epsilon$-fixed element in the geometric conjugacy class of $s$,
namely $s_\beta = h_w s h_w^{-1}$ from \eqref{eq:E.14} with 
$\beta \in \mf B_{\mc L} = W_{\mc L}^\circ \backslash W_{\mc L}$ represented by 
$w = w^\beta$ of minimal length in its class. We saw in \eqref{eq:E.20}
that it corresponds to an $\epsilon$-stable pair $(\mc L_w, \mc T_w) =
(g_w \mc L, g_w \mc T g_w^{-1})$. The above constructions can also be applied
to $(s_\beta,\mc L_w,\mc T_w)$, and they show that
\[
\Res_{G^\epsilon}^{G_c^\epsilon} : \Rep_{W \mc L_{w,1}}(G_c^\epsilon) \to 
\Rep_{(g_w W g_w^{-1}) \mc L_w}(G^\epsilon) = \Rep_{W \mc L}(G^\epsilon)
\]
corresponds to a functor 
\begin{equation}\label{eq:E.21}
\Rep_1 \big( (g_w H g_w^{-1})_c^\epsilon \big) \to 
\Rep_1 \big( (g_w H g_w^{-1})^\epsilon \big)^{\Omega_{\mc L_w ,1}} .
\end{equation}
When we restore the previous bookkeeping by conjugation with $g_w^{-1}$, the Frobenius
action $\epsilon$ is replaced by $\mr{Ad}(g_w^{-1} \epsilon (g_w)) \epsilon$, which 
equals $\mr{Ad}(\dot{w}^\beta) \circ \epsilon$ for some lift $\dot{w}^\beta \in 
N_{\mc G}(\mc T)$ of $w^\beta$. Following the conventions in \cite[\S 12.1]{LuYu} as 
mentioned after \eqref{eq:E.26}, \eqref{eq:E.21} becomes a functor
\[
\Rep_1 \big( H_c^{\sigma_{\beta \epsilon}} \big) \to 
\Rep_1 \big( H^{\sigma_{\beta \epsilon}} \big)^{\Omega_{\mc L,\beta}} .
\]
Thus Theorem \ref{thm:E.1} sends 
\begin{equation}\label{eq:E.24}
\Rep_{s_\beta}(G^\epsilon) = \Res_{G^\epsilon}^{G_c^\epsilon} \big( \Rep_{W \mc L_{w,1}}
(G_c^\epsilon) \big) \text{ to }
\Rep_1 \big( H^{\sigma_{\beta \epsilon}} \big)^{\Omega_{\mc L,\beta}}.
\end{equation}
If we take the direct sum over $\beta \in \mf B_{\mc L} / \mr{Ad}_\epsilon (\Omega_{\mc L})$
of the functors \eqref{eq:E.24}, then by \eqref{eq:E.14} we recover the equivalence from
Theorem \ref{thm:E.1}. Hence \eqref{eq:E.24} is an equivalence of categories for every
$\beta \in \mf B_{\mc L}$.

In the special case $\beta = 1$, we have 
\begin{equation}\label{eq:E.29}
\Omega_{\mc L,1} = \{ \omega \in \Omega_{\mc L} : \omega \epsilon (\omega)^{-1} \in 
W_{\mc L}^\circ \} = \{ \omega \in \Omega_{\mc L} : \omega \epsilon (\omega)^{-1} = 1 \}
= \Omega_{\mc L}^\epsilon .
\end{equation}
From \eqref{eq:E.17} we see that this group is canonically isomorphic to 
$\pi_0 (Z_{\mc G^\vee}(s))^\epsilon$.
\end{proof}

\section{Disconnected groups} 

We no longer require that $\mc G$ is connected. Instead, we assume the following throughout this
section:

\begin{cond}\label{cond:E.3}
$\mc G^\epsilon$ is a smooth $\F_q$-group scheme with reductive neutral component, such that
every connected component of $\mc G$ is $\epsilon$-stable.
\end{cond}

If $\mc G$ would have connected components that are not $\epsilon$-stable, then they would not 
contribute to $G^\epsilon$ and could be ignored for the study of $G^\epsilon$-representations. 
Hence the last part of Condition \ref{cond:E.3} is not a restriction, it merely serves to simplify 
the notations. Notice that $\pi_0 (\mc G)$ may be infinite.

Let $\pi_0^\epsilon (\mc G)$ be the group of connected components of $\mc G$. 
If $x \in \mc G$ represents an element of $\pi_0 (\mc G)$, then 
$x^{-1} \epsilon (x) \in \mc G^\circ$. By Lang's theorem there exist $g \in \mc G^\circ$ 
with $g^{-1} \epsilon (g) = x^{-1} \epsilon (x)$. Then $x g^{-1} \in x \mc G^\circ$ 
is fixed by $\epsilon$, so $x \mc G^\circ$ has $\F_q$-points.

For each $\omega \in \pi_0 (\mc G)$ we pick a representative $\dot \omega \in G^\epsilon = 
\mc G^\epsilon (\F_q)$. That gives a map
\[
\mr{Ad}(\dot \omega) : \Rep (G^{\circ \epsilon}) \to \Rep (G^{\circ \epsilon}).
\]
It depends on the choice of $\dot \omega$, but the map $\mr{Ad}(\ddot \omega)$ for another
representative $\ddot \omega$ differs only by $\mr{Ad}(\ddot{\omega} \dot{\omega}^{-1})$ with
$\ddot{\omega} \dot{\omega}^{-1} \in G^{\circ \epsilon}$. Hence $\mr{Ad}(\dot \omega)$ is
unique up to canonical isomorphisms. 

A $\pi_0 (\mc G)$-equivariant structure on $\pi^\circ \in \Rep (G^{\circ \epsilon})$
consists of a morphism $\pi (\dot \omega) : \pi^\circ \to \mr{Ad}(\dot \omega) \pi^\circ$ 
for each $\omega \in \pi_0 (\mc G)$. These must be multiplicative up to canonical 
isomorphisms, which means that
\[
\pi (\dot \omega_1) \pi (\dot \omega_2) = \pi^\circ (\dot \omega_1 \dot \omega_2
(\dot \omega_3)^{-1}) \pi (\dot \omega_3)
\]
whenever $\omega_1 \omega_2 = \omega_3 \in \pi_0 (\mc G)$.

\begin{lem}\label{lem:E.2}
The category $\Rep (G^{\circ \epsilon})^{\pi_0 (\mc G)}$ of 
$G^{\circ \epsilon}$-representations enriched with a $\pi_0 (\mc G)$-equivariant
structure is naturally equivalent with $\Rep (G^\epsilon)$.
\end{lem}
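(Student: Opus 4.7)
The plan is to construct mutually inverse functors between $\Rep(G^\epsilon)$ and $\Rep(G^{\circ\epsilon})^{\pi_0(\mc G)}$, based on the short exact sequence
\[
1 \to G^{\circ\epsilon} \to G^\epsilon \to \pi_0(\mc G) \to 1,
\]
whose surjectivity on the right is precisely the Lang-theorem argument recalled just before the lemma. A choice of representative $\dot\omega \in G^\epsilon$ for each $\omega \in \pi_0(\mc G)$ then gives a set-theoretic splitting, so every $g \in G^\epsilon$ factors uniquely as $g = h\dot\omega$ with $h \in G^{\circ\epsilon}$ and $\omega$ the component of $g$.

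In one direction, send $\pi \in \Rep(G^\epsilon)$ to $\pi^\circ := \Res^{G^\epsilon}_{G^{\circ\epsilon}} \pi$ together with the intertwiner $\pi(\dot\omega) : \pi^\circ \to \mr{Ad}(\dot\omega)\pi^\circ$ defined as the action of $\dot\omega$ on the common underlying vector space; the multiplicativity axiom then becomes the tautology $\pi(\dot\omega_1)\pi(\dot\omega_2) = \pi(\dot\omega_1 \dot\omega_2) = \pi^\circ(\dot\omega_1 \dot\omega_2 \dot\omega_3^{-1})\pi(\dot\omega_3)$. In the opposite direction, given a pair $(\pi^\circ, \{\pi(\dot\omega)\})$, put an action of $G^\epsilon$ on the underlying vector space of $\pi^\circ$ by $\pi(h\dot\omega) := \pi^\circ(h)\pi(\dot\omega)$. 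Morphisms transport in both directions in the evident way.

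The only nonformal step is checking that the second recipe yields a group homomorphism. Writing $g_i = h_i \dot\omega_i$, $\omega_3 := \omega_1\omega_2$, and $c := \dot\omega_1 \dot\omega_2 \dot\omega_3^{-1} \in G^{\circ\epsilon}$, one has $g_1 g_2 = h_1 \cdot \mr{Ad}(\dot\omega_1)(h_2) \cdot c \cdot \dot\omega_3$. Applying the intertwining property of $\pi(\dot\omega_1)$ to push $\pi^\circ(h_2)$ past it, and then the multiplicativity axiom to rewrite $\pi(\dot\omega_1)\pi(\dot\omega_2)$ as $\pi^\circ(c)\pi(\dot\omega_3)$, collapses $\pi(g_1)\pi(g_2)$ into $\pi(g_1 g_2)$. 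The two functors are manifestly mutually inverse on objects, and a change of representatives from $\dot\omega$ to $\ddot\omega$ modifies $\pi(\dot\omega)$ exactly by $\pi^\circ(\ddot\omega \dot\omega^{-1})$, which is precisely the canonical isomorphism built into the definition of the equivariance category. This last observation, together with the multiplicativity computation, is the only substantive checking required; the main obstacle is simply the careful bookkeeping of the Frobenius-fixed splitting and the intertwining relations, with no reliance on finiteness of $\pi_0(\mc G)$.
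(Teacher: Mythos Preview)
Your proof is correct and follows essentially the same approach as the paper's: both construct the two functors by restriction in one direction and by assembling the equivariant maps $\pi(\dot\omega)$ into a $G^\epsilon$-action in the other, using the multiplicativity axiom to verify that this is a homomorphism. You are simply more explicit in the homomorphism check where the paper just says the $\pi(\dot\omega)$ ``combine to a representation'' by multiplicativity.
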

\begin{proof}
For any $G^\epsilon$-representation $\pi$, write $\pi^\circ = \Res^{G^\epsilon}_{G^{\circ \epsilon}}
\pi$. We obtain a $\pi_0 (\mc G)$-equivariant structure on $\pi^\circ$ by defining
the required morphism $\pi^\circ \to \mr{Ad}(\dot \omega) \pi^\circ$ as $\pi (\dot \omega)$, for any
representative $\dot \omega$ of $\omega \in \pi_0 (\mc G)$.

Conversely, suppose that $\pi^\circ \in \Rep (G^{\circ \epsilon})$ has a $\pi_0 (\mc G)
$-equivariant structure. Then we have a morphism of $G^{\circ \epsilon}$-representations
$\pi (\dot \omega) : \pi^\circ \to \mr{Ad}(\dot \omega) \pi^\circ$, for any representative 
$\dot \omega \in G^\epsilon$ of $\omega \in \pi_0 (\mc G)$. By the multiplicativity of
equivariant structures, these $\pi (\dot \omega)$ combine to a representation of 
\[
\bigcup\nolimits_{\omega \in \pi_0 (\mc G)} \, \omega G^{\circ \epsilon} = G^\epsilon 
\]
on the vector space underlying $\pi^\circ$. It follows from $\pi (1) = \mr{id}$ that 
$\pi (g) = \pi^\circ (g)$ for all $g \in G^{\circ \epsilon}$, so $\pi$ extends $\pi^\circ$.

It is clear that the functors $\pi \mapsto \pi^\circ$ and $\pi^\circ \mapsto \pi$ are mutually
inverse, so they define equivalences of categories.
\end{proof}

Let $\mc T$ be a maximal $\F_q$-torus in $\mc G^\circ$ and let $\mc T_0$ be a maximally split 
maximal $\F_q$-torus in $\mc G^\circ$. Let $\mc B$ and $\mc B_0$ be Borel $\F_q$-subgroup of 
$\mc G^\circ$ containing, respectively, $\mc T$ and $\mc T_0$. Although $\mc T$ may equal to
$\mc T_0$, we remain more flexible by not requiring that.

\begin{lem}\label{lem:E.9}
The group $G^\epsilon$ equals $G^{\circ \epsilon} N_{G^\epsilon}(T_0^\epsilon, B_0^\epsilon)$.
\end{lem}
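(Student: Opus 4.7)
The plan is to deform $g \in G^\epsilon$ by successive left multiplication by elements of $G^{\circ\epsilon}$, first so that it normalizes $\mc B_0$, and then so that it also normalizes $\mc T_0$. Both steps rely on the standard Lang-type fact that if $\mc K$ is a connected $\F_q$-subgroup of $\mc G^\circ$, then $H^1(\epsilon,\mc K) = 1$, which makes any $\epsilon$-stable $\mc G^\circ$-conjugate of $\mc K$ in fact $G^{\circ\epsilon}$-conjugate.

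First I would observe that for $g \in G^\epsilon$ the Borel $g\mc B_0 g^{-1}$ lies in $\mc G^\circ$ (since $\mc G^\circ$ is normal in $\mc G$) and is $\epsilon$-stable (since $g$ and $\mc B_0$ are). Any two $\epsilon$-stable Borel subgroups of $\mc G^\circ$ are $G^{\circ\epsilon}$-conjugate: the variety of Borels is $\mc G^\circ / \mc B_0$, and the exact sequence of pointed sets
\[
G^{\circ\epsilon} / B_0^\epsilon \to (\mc G^\circ / \mc B_0)^\epsilon \to H^1(\epsilon,\mc B_0)
\]
has trivial right-hand term because $\mc B_0$ is connected. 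Hence there is $h_1 \in G^{\circ\epsilon}$ with $h_1 g \mc B_0 (h_1 g)^{-1} = \mc B_0$, i.e.\ $h_1 g$ normalizes $\mc B_0$.

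Next I would use that $(h_1 g)\mc T_0 (h_1 g)^{-1}$ is an $\epsilon$-stable maximal torus of $\mc B_0$. The variety of maximal tori in $\mc B_0$ is $\mc B_0 / \mc T_0 \cong \mc U_0$, where $\mc U_0$ is the (connected) unipotent radical; Lang's theorem gives $H^1(\epsilon,\mc U_0) = 1$, so $\epsilon$-stable maximal tori of $\mc B_0$ are $B_0^\epsilon$-conjugate. Pick $h_2 \in B_0^\epsilon$ with $h_2 (h_1 g) \mc T_0 (h_2 h_1 g)^{-1} = \mc T_0$. Since $h_2 \in \mc B_0$, the element $h_2 h_1 g$ still normalizes $\mc B_0$; it therefore normalizes the pair $(\mc T_0,\mc B_0)$.

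Finally, because $T_0^\epsilon$ is Zariski-dense in $\mc T_0$ (true for any $\F_q$-torus) and $B_0^\epsilon$ is Zariski-dense in $\mc B_0$ (by Lang's theorem applied to $\mc B_0$), normalizing the $\F_q$-point groups is equivalent to normalizing the underlying group schemes. Thus $h_2 h_1 g \in N_{G^\epsilon}(T_0^\epsilon, B_0^\epsilon)$, and writing $g = (h_2 h_1)^{-1}\bigl(h_2 h_1 g\bigr)$ exhibits $g$ in $G^{\circ\epsilon}\,N_{G^\epsilon}(T_0^\epsilon, B_0^\epsilon)$. The main thing to be careful about is the passage from normalizers of $\F_q$-points to normalizers of the underlying $\F_q$-groups, but the density statement above handles it cleanly.
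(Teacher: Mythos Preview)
Your argument follows the same two-step strategy as the paper's: first conjugate by an element of $G^{\circ\epsilon}$ into the normalizer of $\mc B_0$, then by an element of $B_0^\epsilon$ into the normalizer of $(\mc T_0,\mc B_0)$. The only substantive difference is in the second step: the paper works with the maximal $\F_q$-split subtorus $\mc S \subset \mc T_0$, conjugates $\mc S$ back to itself using that maximal $\F_q$-split tori in $\mc B_0$ are $B_0^\epsilon$-conjugate, and then recovers $\mc T_0$ as $Z_{\mc G^\circ}(\mc S)$ (using that $\mc G^\circ$ is quasi-split over $\F_q$). Your direct route---$\epsilon$-stable maximal tori of $\mc B_0$ are $B_0^\epsilon$-conjugate because $\mc B_0/\mc T_0 \cong \mc U_0$ and Lang's theorem applies---is equally valid and slightly more economical.

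One genuine slip: in your final paragraph you assert that $T_0^\epsilon$ is Zariski-dense in $\mc T_0$ and $B_0^\epsilon$ in $\mc B_0$. This is false; these are finite subsets of positive-dimensional varieties. Fortunately you do not need the equivalence of the two normalizers, only the trivial direction: since $h_2 h_1 g$ lies in $G^\epsilon$ and normalizes the algebraic groups $\mc T_0$ and $\mc B_0$, it automatically normalizes their groups of $\epsilon$-fixed points $T_0^\epsilon$ and $B_0^\epsilon$. So the conclusion stands once you drop the density claim and state only this easy implication.
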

\begin{proof}
For $x \in G^\epsilon$, $x \mc B_0 x^{-1}$ is an $\epsilon$-stable Borel subgroup of $\mc G^\circ$.
All such subgroups are $G^{0\epsilon}$-conjugate \cite[15.4.6.ii]{Spr}, so there exists 
$g \in G^{\circ \epsilon}$ such that $g x \mc B_0 x^{-1} g^{-1} = \mc B_0$. 

Let $\mc S$ be the maximal $\F_q$-split subtorus of $\mc T_0$. Since $\mc G^\circ$ is 
quasi-split over $\F_q$, the centralizer $Z_{\mc G^\circ}(\mc S)$ equals $\mc T_0$. 
As $gx \in N_{G^\epsilon}(B_0^\epsilon)$, $gx \mc S x^{-1} g^{-1}$ is a maximal $\F_q$-split 
torus in $\mc B_0$. All such tori are $B^\epsilon$-conjugate \cite[14.4.3]{Spr}, so we can 
find $b \in B^\epsilon$ such that  $bgx \mc S (bgx)^{-1} = \mc S$. Then
\[
bgx \mc T_0 (bgx)^{-1} = bgx Z_{\mc G^\circ}(\mc S) (bgx)^{-1} = Z_{\mc G^\circ} \big( bgx
\mc S (bgx)^{-1} \big) = Z_{\mc G^\circ} (\mc S) = \mc T_0 ,
\]
so $bgx \in N_{G^\epsilon}(B_0^\epsilon,T_0^\epsilon)$. It follows that $x \in G^{\circ \epsilon}
N_{G^\epsilon}(B_0^\epsilon,T_0^\epsilon)$.
\end{proof}

Lemma \ref{lem:E.9} implies that the natural map $N_{G^\epsilon}(T_0^\epsilon, B_0^\epsilon) / 
T_0^\epsilon \to \pi_0 (\mc G)$ is a group isomorphism. That yields split short exact sequences
\begin{equation}\label{eq:E.10}
\begin{array}{ccccccccc}
1 & \to & W^\circ = W(\mc G^\circ,\mc T) & \to & W = W (\mc G,\mc T) & \to & \pi_0 (\mc G) & \to & 1 ,\\
1 & \to & W(G^{\circ \epsilon},T_0^\epsilon) & \to & W (G^\epsilon,T_0^\epsilon) & 
\to & \pi_0 (\mc G) & \to & 1 . \\
\end{array}
\end{equation}
Condition \ref{cond:E.3} enables us to pick representatives $\dot \omega \in 
N_{G^\epsilon}(T_0^\epsilon, B_0^\epsilon)$ for $\omega \in \pi_0 (\mc G)$, unique up to $T_0^\epsilon$. 
The action of $\pi_0 (\mc G)$ on $\Rep (G^{\circ \epsilon})$ permutes the subcategories 
$\Rep_{W^\circ \mc L} (G^{\circ \epsilon})$ and stabilizes the subcategory
\begin{equation}\label{eq:E.8}
\Rep_{W \mc L}(G^{\circ \epsilon}) = \bigoplus_{w \in W^\circ \setminus W / W_{\mc L}}
\Rep_{W^\circ w \mc L} (G^{\circ \epsilon})
\end{equation}
generated by irreducible $G^{\circ\epsilon}$-representations with semisimple parameter in $W \mc L$. 
Indeed, for $\mc T = \mc T_0$ we see this with the representatives of $\pi_0 (\mc G)$ in
$N_{G^\epsilon}(T_0^\epsilon, B_0^\epsilon)$. For other $\mc T$ it follows because $(\mc T,\mc B)$
and $(\mc T_0,\mc B_0 )$ are $\mc G^\circ$-conjugate and $\Rep_{W^\circ \mc L} (G^{\circ \epsilon})$ 
depends only on $W^\circ \mc L$ up to $\mc G^\circ$-conjugacy. 

We define the geometric series $\Rep_{W \mc L}(G^\epsilon)$ as the category of 
$G^\epsilon$-representations whose restriction to $G^{\circ \epsilon}$ lies in 
$\Rep_{W \mc L}(G^{\circ \epsilon})$. Thus 
Lemma \ref{lem:E.2} provides a natural equivalence of categories
\begin{equation}\label{eq:E.7}
\Rep_{W \mc L}(G^\epsilon) \cong \Rep_{W \mc L}(G^{\circ \epsilon})^{\pi_0 (\mc G)} .
\end{equation}
The group $\pi_0 (\mc G) \cong W / W^\circ$ permutes the direct summands in \eqref{eq:E.8} 
transitively, which leads to a natural equivalence
\begin{equation}\label{eq:E.4}
\Rep_{W \mc L}(G^{\circ \epsilon})^{\pi_0 (\mc G)} \cong 
\Rep_{W^\circ \mc L}(G^{\circ \epsilon})^{\pi_0 (\mc G)_{W^\circ \mc L}} . 
\end{equation}
We define $\Omega_{\mc L} = \mr{Stab}_{W_{\mc L}}(\Phi_{\mc L}^+) \cong W_{\mc L} / W_{\mc L}^\circ$ 
as before, and we let $\Omega_{\mc L}^\circ \cong (W^\circ)_{\mc L} / W_{\mc L}^\circ$ 
be its version for $\mc G^\circ$. Then
\begin{equation}\label{eq:E.5}
\pi_0 (\mc G)_{W^\circ \mc L} \cong \Omega_{\mc L} / \Omega_{\mc L}^\circ .
\end{equation}
Let $\mf B_{\mc L}^\circ$ be $\mf B_{\mc L}$ for $\mc G^\circ$. With Theorem \ref{thm:E.1} we can
transfer the action of $\pi_0 (\mc G)_{W^\circ \mc L}$ on $\Rep_{W^\circ \mc L} (G^{\circ \epsilon})$
to an action on 
\begin{equation}\label{eq:E.9}
\Big( \bigoplus\nolimits_{\beta \in \mf B_{\mc L}^\circ} 
\Rep_1 \big( H^{\sigma_{\beta \epsilon}} \big) \Big)^{\Omega_{\mc L}^\circ} ,
\end{equation}
unique up to canonical isomorphisms. Let us make that more explicit. The action of 
$\pi_0 (\mc G)_{W^\circ \mc L}$ on $\bigoplus\nolimits_{\beta \in \mf B_{\mc L}^\circ} \Rep_1 
(H^{\sigma_{\beta \epsilon}})$ can be described in the same way as the action of $\Omega_{\mc L}^\circ$ 
in \eqref{eq:E.28}, now only with representatives $n(\omega,\beta)$ from $N_{\mc G}(\mc T,\mc B)$. 
The effect on a morphism $\lambda_{\dot w} : \pi_\beta \to \mr{Ad}(\dot{w}) \pi_\beta$, from an 
$\Omega_{\mc L}^\circ$-equivariant structure $\lambda$ on $\pi = \bigoplus_\beta \pi_\beta$, 
is the morphism
\[
\mr{Ad}(n(\omega,\beta)) \pi_\beta \to \mr{Ad}(\dot w) \mr{Ad}(n(\omega,\beta)) \pi_\beta
\]
given by 
\[
\lambda_{n(\omega,\beta)^{-1} \dot{w} n(\omega,\beta)} : \pi_\beta \to 
\mr{Ad}(n(\omega,\beta)^{-1}) \mr{Ad}(\dot{w}) \mr{Ad}( n(w,\beta)) \pi_\beta .
\]
Via this transfer of actions, every $\pi_0 (\mc G)_{W^\circ \mc L}$-equivariant
structure on \eqref{eq:E.9} can be identified with one $\pi_0 (\mc G)_{W^\circ \mc L}$-equivariant 
structure on $\Rep_{W^\circ \mc L} (G^{\circ \epsilon})$.

The group $\Omega_{\mc L}$ acts on $\bigoplus\nolimits_{\beta \in \mf B_{\mc L}^\circ} 
\Rep_1 (H^{\sigma_{\beta \epsilon}})$, by the same constructions as in the case of a connected 
$\mc G$. Again this action is well-defined up to canonical isomorphisms, and there is a notion of
$\Omega_{\mc L}$-equivariant structures on $\bigoplus\nolimits_{\beta \in \mf B_{\mc L}^\circ} 
\Rep_1 (H^{\sigma_{\beta \epsilon}})$.

\begin{thm}\label{thm:E.4}
Suppose that $\mc G^\epsilon$ satisfies Condition \ref{cond:E.3}, and let $\mc L$ be a rank one 
character sheaf on $\mc T$, such that $W^\circ \mc L$ 
is $\epsilon$-stable. There exists a canonical equivalence of categories
\[
\Rep_{W \mc L} (G^\epsilon) \cong \Big( \bigoplus\nolimits_{\beta \in \mf B_{\mc L}^\circ} 
\Rep_1 \big( H^{\sigma_{\beta \epsilon}} \big) \Big)^{\Omega_{\mc L}}. 
\]
\end{thm}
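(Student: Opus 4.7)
The plan is to splice together ingredients already prepared in the lead-up. Starting from \eqref{eq:E.7}, rewrite $\Rep_{W\mc L}(G^\epsilon)$ as $\Rep_{W\mc L}(G^{\circ\epsilon})^{\pi_0(\mc G)}$. Then apply \eqref{eq:E.4} and \eqref{eq:E.5} to shrink the outer equivariance to $\Omega_{\mc L}/\Omega_{\mc L}^\circ$ acting on $\Rep_{W^\circ \mc L}(G^{\circ\epsilon})$. Finally, apply Theorem \ref{thm:E.1} to the connected group $\mc G^\circ$ to replace this inner category by $\bigl(\bigoplus_{\beta \in \mf B_{\mc L}^\circ} \Rep_1(H^{\sigma_{\beta\epsilon}})\bigr)^{\Omega_{\mc L}^\circ}$. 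The composite is a canonical equivalence
\[
\Rep_{W\mc L}(G^\epsilon) \;\cong\; \Bigl( \bigl( \bigoplus\nolimits_{\beta \in \mf B_{\mc L}^\circ} \Rep_1(H^{\sigma_{\beta\epsilon}}) \bigr)^{\Omega_{\mc L}^\circ} \Bigr)^{\Omega_{\mc L}/\Omega_{\mc L}^\circ},
\]
where the outer $(\Omega_{\mc L}/\Omega_{\mc L}^\circ)$-action is obtained by transporting the $\pi_0(\mc G)_{W^\circ \mc L}$-action across Theorem \ref{thm:E.1}, in the way explicitly described just before the statement.

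The core remaining step is to identify this iterated equivariant category with the single $\Omega_{\mc L}$-equivariant category appearing in the statement. The short exact sequence $1 \to \Omega_{\mc L}^\circ \to \Omega_{\mc L} \to \Omega_{\mc L}/\Omega_{\mc L}^\circ \to 1$ and the fact that the outer action is already implemented by representatives $n(\omega,\beta) \in N_{\mc G}(\mc T,\mc B)$, which also represent elements of $\Omega_{\mc L}$, make this identification essentially formal. I would realize it as follows: choose a set-theoretic section $\Omega_{\mc L}/\Omega_{\mc L}^\circ \to \Omega_{\mc L}$, and for each $\omega \in \Omega_{\mc L}$ write $\omega = \omega_1 \omega_0$ with $\omega_0 \in \Omega_{\mc L}^\circ$ and $\omega_1$ from the section. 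The required morphism $\pi(n(\omega,\beta))$ for a single $\Omega_{\mc L}$-equivariant structure is then the composite of the morphism supplied by the outer structure for $\omega_1$ with the morphism supplied by the inner structure for $\omega_0$. Conversely, any $\Omega_{\mc L}$-equivariant structure restricts to an $\Omega_{\mc L}^\circ$-equivariant structure (giving the inner data) and descends modulo $\Omega_{\mc L}^\circ$ to the outer data, because the transfer rule for the outer action was defined precisely so that these two kinds of morphisms compose consistently. The multiplicativity axiom for $\Omega_{\mc L}$ decomposes into: multiplicativity of the inner structure, multiplicativity of the outer structure, and a cocycle-type compatibility when one conjugates an element of $\Omega_{\mc L}^\circ$ by a section representative — and this last condition is exactly what the transfer formula displayed in \eqref{eq:E.9} encodes.

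The main obstacle I anticipate is verifying that this identification is truly canonical, rather than merely existing, independent of the intermediate choices (the lifts $\dot\omega \in N_{G^\epsilon}(T_0^\epsilon,B_0^\epsilon)$, the representatives $n(\omega,\beta)$, and the set-theoretic section). Each ambiguity lies in a subgroup that acts on representations by an inner automorphism coming from $\mc T^{\sigma_{\beta\epsilon}}$, and these are exactly the canonical isomorphisms already built into the notion of $\Omega_{\mc L}$-equivariant structure. One must check that the ambiguities introduced by different choices at the two stages combine to the single ambiguity allowed by $\Omega_{\mc L}$-equivariance; this is a bookkeeping task but not a conceptual one. Once this is settled, the naturality of the equivalence with respect to the data $(\mc G,\mc L,\mc T)$ is inherited from the three constituent equivalences \eqref{eq:E.7}, \eqref{eq:E.4}, and Theorem \ref{thm:E.1}.
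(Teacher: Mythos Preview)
Your proposal is correct and follows essentially the same route as the paper: chain \eqref{eq:E.7}, \eqref{eq:E.4}, and Theorem \ref{thm:E.1} to reach the iterated equivariant category, then use \eqref{eq:E.5} to collapse the $\Omega_{\mc L}^\circ$- and $(\Omega_{\mc L}/\Omega_{\mc L}^\circ)$-structures into a single $\Omega_{\mc L}$-equivariant structure. The paper states this collapsing step in one sentence, whereas you spell out the section/cocycle bookkeeping more explicitly; your discussion of the canonicity of this identification is more careful than what the paper records.
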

\begin{proof}
By Lemma \ref{lem:E.2}, \eqref{eq:E.7}, \eqref{eq:E.4} and Theorem \ref{thm:E.1}, 
there are canonical equivalences of categories
\begin{equation}\label{eq:E.6}
\Rep_{W \mc L} (G^\epsilon) \cong \Rep_{W^\circ \mc L} (G^{\circ \epsilon})^{\pi_0 (\mc G)_{W^\circ 
\mc L}} \cong \Big( \big( \bigoplus\nolimits_{\beta \in \mf B_{\mc L}^\circ} 
\Rep_1 (H^{\sigma_{\beta \epsilon}}) \big)^{\Omega_{\mc L}^\circ} \Big)^{\pi_0 (\mc G)_{W^\circ \mc L}} \!.
\end{equation}
In view of \eqref{eq:E.5}, a $\pi_0 (\mc G)_{W^\circ \mc L}$-equivariant structure on top of a 
$\Omega_{\mc L}^\circ$-equivariant structure contains precisely the same information as an
$\Omega_{\mc L}$-equivariant structure. Hence the right hand side of \eqref{eq:E.6} is naturally
equivalent to $\bigoplus\nolimits_{\beta \in \mf B_{\mc L}^\circ} 
\Rep_1 (H^{\sigma_{\beta \epsilon}})$ enriched with $\Omega_{\mc L}$-equivariant structures.
\end{proof}

We turn to rational series for $G^\epsilon$. First we have to define them properly.
By \eqref{eq:E.23}, we can parametrize the rational series in $\Rep (G^{\circ \epsilon})$
by $W^\circ$-orbits of pairs $(\mc L,\beta)$. If we set that up with $\mc T_0$, \eqref{eq:E.10}
shows that the groups $\pi_0 (\mc G) \cong N_{G^\epsilon}(T_0^\epsilon,B_0^\epsilon)/T_0^\epsilon$
and $W(\mc G,\mc T_0) \cong W$ and act naturally on the set of such parameters. Then the 
definition of $\Rep_{W^\circ (\mc L,\beta)}(G^{\circ \epsilon})$ in terms of generators obtained
by Deligne--Lusztig induction shows that $\pi_0 (\mc G)$ and $W$ and permute the
various rational series according to the actions on their parameters. 
It follows that the category
\[
\Rep_{W(\mc L,\beta)} (G^{\circ \epsilon}) := \bigoplus\nolimits_{\gamma \in W^\circ W_{\mc L,\beta}
\backslash W} \Rep_{W^\circ \gamma \mc L, \gamma \beta \epsilon (\gamma)^{-1}} (G^{\circ \epsilon})
\]
is stable under the action of $G^\epsilon$ on $\Rep (G^\circ)$. Via conjugation in $\mc G^\circ$,
these conside\-ra\-tions also apply to any $\epsilon$-stable maximal torus $\mc T$ instead of $\mc T_0$.

We define $\Rep_{W(\mc L,\beta)} (G^\epsilon)$ as the ca\-te\-gory of 
$G^\epsilon$-representations whose restriction to $G^{\circ \epsilon}$ lies in 
$\Rep_{W(\mc L,\beta)} (G^{\circ \epsilon})$. When $W^\circ (\mc L,\beta)$ corresponds to 
$\mr{Ad}(G^{\circ \vee \epsilon}) s_\beta$, we write 
\[
\Rep_{W(\mc L,\beta)} (G^\epsilon) = \Rep_{s_\beta}(G^\epsilon). 
\]
We call this the rational series of $G^\epsilon$ associated to $s_\beta$ or to $W(\mc L,\beta)$. 
It actually corresponds to a set of semisimple elements in $G^{\circ \vee \epsilon}$ larger than 
the rational conjugacy class of $s_\beta$, but it is difficult to express that precisely because 
we do not have a group dual to $\mc G$.

As the analysis of $\Rep_{W(\mc L,\beta)} (G^\epsilon)$ proceeds like for $G^{\circ \epsilon}$, 
we will carry it out more briefly. Like in \eqref{eq:E.7} and \eqref{eq:E.4} we have
\begin{equation}\label{eq:E.25}
\Rep_{W (\mc L,\beta)}(G^\epsilon) \cong \Rep_{W (\mc L,\beta)}(G^{\circ \epsilon})^{\pi_0 (\mc G)} 
\cong \Rep_{W^\circ (\mc L,\beta)}(G^{\circ \epsilon})^{\pi_0 (\mc G)_{W^\circ (\mc L,\beta)}} . 
\end{equation}
There are isomorphisms
\begin{equation}\label{eq:E.27}
\pi_0 (\mc G)_{W^\circ (\mc L,\beta)} \cong W_{\mc L,\beta} / (W^\circ)_{\mc L,\beta} =
(W_{\mc L}^\circ \rtimes \Omega_{\mc L,\beta}) / (W_{\mc L}^\circ \rtimes \Omega^\circ_{\mc L,\beta})
\cong \Omega_{\mc L, \beta} / \Omega_{\mc L,\beta}^\circ .
\end{equation}
By Lemma \ref{lem:E.5} it suffices to consider the cases where $\mc L$ is $\epsilon$-stable and 
$\beta = 1$. 

\begin{thm}\label{thm:E.8}
Suppose that $\mc G^\epsilon$ satisfies Condition \ref{cond:E.3}. Let $\mc L$ be an 
$\epsilon$-stable rank one character sheaf on $\mc T$ and suppose that $W^\circ (\mc L,1)$ 
corresponds to $s \in G^{\circ \vee \epsilon}$ via \eqref{eq:E.23}. There exists a 
canonical equivalence of categories 
\[
\Rep_s (G^\epsilon) = \Rep_{W(\mc L,1)} (G^\epsilon) \cong 
\Rep_1 (H^\epsilon )^{\Omega_{\mc L}^\epsilon} .
\] 
\end{thm}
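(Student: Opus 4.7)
The plan is to follow the template established for the geometric series in Theorem \ref{thm:E.4}, but with Theorem \ref{thm:E.7} in place of Theorem \ref{thm:E.1}. The three main moves are: (i) peel off the $\pi_0(\mc G)$-equivariance, (ii) apply the connected rational Jordan decomposition inside, and (iii) reassemble the two equivariance layers into a single one.

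First I would invoke the chain of equivalences \eqref{eq:E.25} to obtain
\[
\Rep_{W(\mc L,1)}(G^\epsilon) \;\cong\; \Rep_{W^\circ (\mc L,1)}(G^{\circ \epsilon})^{\pi_0(\mc G)_{W^\circ (\mc L,1)}} .
\]
Since $\mc L$ is $\epsilon$-stable and $\beta=1$, the inner category on the right is exactly $\Rep_s(G^{\circ \epsilon})$, so Theorem \ref{thm:E.7} applied to $\mc G^\circ$ supplies a canonical equivalence with $\Rep_1(H^\epsilon)^{\Omega_{\mc L}^{\circ \epsilon}}$, where $\Omega_{\mc L}^{\circ \epsilon} = \Omega_{\mc L,1}^\circ$ by \eqref{eq:E.29} applied to $\mc G^\circ$.

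Next I would transfer the action of $\pi_0(\mc G)_{W^\circ (\mc L,1)}$ through this equivalence. Picking representatives in $N_{G^\epsilon}(T_0^\epsilon, B_0^\epsilon)$ (whose existence is guaranteed by Lemma \ref{lem:E.9}), the $\pi_0(\mc G)_{W^\circ (\mc L,1)}$-action on the endoscopic side is described exactly as the $\Omega_{\mc L}^\circ$-action in \eqref{eq:E.28}, only with the representatives $n(\omega,\beta)$ chosen to normalize $(\mc T,\mc B)$; this is the same procedure used in the paragraphs preceding Theorem \ref{thm:E.4}, and it is well-defined up to canonical isomorphisms.

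Finally, combining \eqref{eq:E.27} with \eqref{eq:E.29} (both for $\mc G$ and for $\mc G^\circ$ at $\beta=1$) yields the short exact sequence
\[
1 \to \Omega_{\mc L}^{\circ \epsilon} \to \Omega_{\mc L}^\epsilon \to \pi_0(\mc G)_{W^\circ (\mc L,1)} \to 1 .
\]
Exactly as in the passage from \eqref{eq:E.6} to the conclusion of Theorem \ref{thm:E.4}, a $\pi_0(\mc G)_{W^\circ (\mc L,1)}$-equivariant structure on top of an $\Omega_{\mc L}^{\circ \epsilon}$-equivariant structure packages canonically into a single $\Omega_{\mc L}^\epsilon$-equivariant structure, giving
\[
\Bigl(\Rep_1(H^\epsilon)^{\Omega_{\mc L}^{\circ \epsilon}}\Bigr)^{\pi_0(\mc G)_{W^\circ (\mc L,1)}} \;\cong\; \Rep_1(H^\epsilon)^{\Omega_{\mc L}^\epsilon} ,
\]
which closes the chain.

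The main obstacle I anticipate is a bookkeeping one, already present in Theorem \ref{thm:E.4}: checking that the two sets of equivariance data — the inner $\Omega_{\mc L}^{\circ \epsilon}$ morphisms coming from Theorem \ref{thm:E.7} and the outer morphisms from $\pi_0(\mc G)_{W^\circ (\mc L,1)}$ transported through that equivalence — actually satisfy the multiplicativity conditions of a genuine $\Omega_{\mc L}^\epsilon$-equivariant structure, and that all ambiguities collapse into the same class of canonical (inner-automorphism) isomorphisms. Once this compatibility is in hand, no further geometric input beyond Theorem \ref{thm:E.7} and the identifications \eqref{eq:E.25}, \eqref{eq:E.27}, \eqref{eq:E.29} is required.
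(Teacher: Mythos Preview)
Your proposal is correct and follows essentially the same route as the paper's proof: invoke \eqref{eq:E.25} to reduce to $\Rep_{W^\circ(\mc L,1)}(G^{\circ\epsilon})$ with a $\pi_0(\mc G)_{W^\circ(\mc L,1)}$-equivariant structure, apply Theorem \ref{thm:E.7} to the connected group, and then use \eqref{eq:E.27} and \eqref{eq:E.29} to merge the two layers of equivariance into a single $\Omega_{\mc L}^\epsilon$-equivariant structure. The paper's argument is terser (it does not spell out the short exact sequence or the transfer of the outer action), but the ingredients and the logical skeleton are identical.
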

\begin{proof}
This is entirely analogous to Theorem \ref{thm:E.4}. By Lemma \ref{lem:E.2}, \eqref{eq:E.25} 
and Theorem \ref{thm:E.7}, there are canonical equivalences of categories
\[
\Rep_{W (\mc L,1)} (G^\epsilon) \cong \Rep_{W^\circ (\mc L,1)} (G^{\circ \epsilon})^{\pi_0 
(\mc G)_{W^\circ (\mc L,1)}} \cong \Big( \Rep_1 (H^\epsilon)^{\Omega_{\mc L,1}^\circ} 
\Big)^{\pi_0 (\mc G)_{W^\circ (\mc L,1)}} .
\]
In the rightmost term, the two equivariant structures combine to one 
$\Omega_{\mc L,1}$-equivariant structure, by \eqref{eq:E.27}.
We recall from \eqref{eq:E.29} that $\Omega_{\mc L,1} = \Omega_{\mc L}^\epsilon$.
\end{proof}

\textbf{Acknowledgement.}
The author thanks Tasho Kaletha for many enlightening discussions on this and
related topics.

\end{document}